\documentclass[11pt]{article}
\usepackage[a4paper,margin=30mm]{geometry}
\usepackage{amsmath,amsthm,amssymb,mathtools}
\usepackage{microtype}
\usepackage{tikz}
\usetikzlibrary{calc}
\usepackage[colorlinks=true,citecolor=black,linkcolor=black,urlcolor=blue]{hyperref}

\hypersetup{
  pdftitle={Root cubes and two-vertex deletions from daisy grids},
  pdfauthor={Guangfu Wang}
}

\newcommand{\cO}{\mathcal{O}}
\newcommand{\cR}{\mathcal{R}}
\newcommand{\cube}{\{0,1\}}
\newcommand{\dn}{\downarrow}

\theoremstyle{plain}
\newtheorem{theorem}{Theorem}
\newtheorem{lemma}[theorem]{Lemma}
\newtheorem{corollary}[theorem]{Corollary}

\theoremstyle{definition}
\newtheorem{definition}[theorem]{Definition}

\theoremstyle{remark}
\newtheorem{remark}[theorem]{Remark}

\newcommand{\email}[1]{\texttt{#1}}
\newcommand{\arxiv}[1]{\href{https://arxiv.org/abs/#1}{\texttt{arXiv:#1}}}
\makeatletter
\def\doi{\begingroup\catcode`\_=12\relax\doi@arg}
\def\doi@arg#1{\href{https://doi.org/#1}{\nolinkurl{doi:#1}}\endgroup}
\makeatother

\title{Root cubes and two-vertex deletions from daisy grids}

\author{Guangfu Wang\\[1ex]
\small School of Mathematics and Information Sciences, Yantai University,\\
\small Yantai, Shandong 264005, China\\
\small \email{gfwang@ytu.edu.cn}}
\date{}

\begin{document}
\maketitle

\begin{abstract}
Daisy cubes are finite partial cubes that admit an isometric hypercube
embedding whose labels form a Boolean down-set.  Call a vertex a root if it
can receive the all-zero label in such an embedding.  Using the peripheral
$\Theta$-class characterization implicit in earlier work of Taranenko and of
Xie--Xu, and Vesel's description of possible zero vertices, we prove the
following intrinsic refinement.  If exactly $b$ classes are peripheral on
both sides, then $G\cong G_0\square Q_b$, where $G_0$ has a unique root and
the roots of $G$ induce the displayed convex $b$-cube.

Our main results concern the daisy grids
$B_{r,s}=P_3^{\square r}\square Q_s$.  We classify every nonempty graph
$B_{r,s}-\{x,y\}$ that is a partial cube, a daisy cube, or a minimal
forbidden partial-cube minor for daisy cubes.  The ambient-isometric cases
admit a uniform coordinate description.  A local four-cycle argument reduces
every non-isometric noncorner partial-cube deletion to $r+s\leq3$, and an
exact orbit analysis leaves precisely eleven sporadic orbits.  None is daisy
or pc-minor-minimal.  Consequently, a deletion is pc-minor-minimal non-daisy
exactly when the deleted vertices are opposite corners with $s\geq1$ and
$2r+s\geq3$, or when a boundary $P_3$-edge is deleted from $B_{1,1}$.  This
recovers the known hypercube and one-$P_3$ examples and produces a new
infinite family with at least two $P_3$-factors.  A dependency-free program
provides exact certificates for the finite orbit analysis.
\end{abstract}

\noindent\textbf{Mathematics Subject Classifications:}
05C12, 05C75, 05C83.

\smallskip
\noindent\textbf{Keywords:}
Daisy cube; partial cube; Djokovi\'c--Winkler relation; peripheral
$\Theta$-class; Cartesian product; partial cube-minor; vertex deletion.

\section{Introduction}
All graphs considered here are finite, simple, and connected, unless stated otherwise. For a graph $G$, let $V(G)$ and $E(G)$ denote its vertex set and edge set. The distance between $u,v\in V(G)$ is denoted by $d_G(u,v)$, or by $d(u,v)$ when the graph is clear. If $S\subseteq V(G)$, then $G[S]$ is the subgraph induced by $S$. The path on $k$ vertices is denoted by $P_k$, and $K_1$ denotes the one-vertex graph.

Partial cubes are graphs whose shortest-path metric is realizable as Hamming distance on binary strings. Equivalently, they are the graphs that occur as isometric subgraphs of hypercubes. Their basic coordinate structure is encoded by the Djokovi\'c--Winkler relation: in a partial cube, this relation partitions the edge set into equivalence classes, called $\Theta$-classes. Each $\Theta$-class behaves as one hypercube coordinate and determines two complementary convex halfspaces \cite{Djokovic1973,Winkler1984,ImrichKlavzar2000}. This cut structure is one of the main reasons why partial cubes admit a rich theory of convexity, Cartesian products, median-type phenomena, and minor operations adapted to isometric hypercube embeddings.

Daisy cubes were introduced by Klav\v{z}ar and Mollard \cite{KlavzarMollard2019}. For $X\subseteq\cube^n$, the daisy cube generated by $X$ is the subgraph of $Q_n$ induced by the union of all intervals from $0^n$ to vertices of $X$; equivalently, its vertex set is $\dn X$, the coordinatewise down-set generated by $X$. Daisy cubes therefore form a natural down-set subclass of partial cubes. They include hypercubes and several familiar hypercube-like families, such as Fibonacci cubes, Lucas cubes, and bipartite wheels \cite{Klavzar2013,MunariniPerelliCippoZagagliaSalvi2001,KlavzarMollard2019}.

The theory has developed in several directions. Taranenko \cite{Taranenko2020} characterized daisy cubes by a special expansion procedure and introduced daisy graphs of rooted graphs. Vesel \cite{Vesel2021} characterized the vertices that may receive the all-zero label in a proper embedding and obtained a linear-time proper-embedding algorithm for graphs known to be daisy cubes. Dravec and Taranenko \cite{DravecTaranenko2023} extended the theme to daisy Hamming graphs. Polynomial characterizations and identities for daisy cubes have also been investigated, most recently by Zheng, Xie, and Xu \cite{ZhengXieXu2026}. In chemical graph theory, daisy cubes occur as resonance graphs of suitable plane bipartite graphs \cite{BrezovnikCheTratnikZigert2025}, with subsequent work on decomposition and $\tau$-graph structure \cite{CheChen2025,CheTratnikZigert2026}.

The down-set definition is useful, but it is not intrinsic: it requires a
hypercube embedding, a choice of $0^n$, and an orientation of every
coordinate.  Taranenko \cite[Proposition~2.1]{Taranenko2020} proved that every
$\Theta$-class of a daisy cube is peripheral.  Xie and Xu
\cite[Theorem~1.2 and the proof of (iv)$\Rightarrow$(v)]{XieXu2025} proved the
converse for finite median graphs; their coordinate-orientation argument in
fact uses only the partial-cube structure.  We use this general form as a
previously implicit characterization, not as a new contribution.

We first formulate an intrinsic refinement of the root structure.
Vesel \cite[Theorem~1 and Lemma~2]{Vesel2021} showed that the vertices that can
receive the zero label form a Boolean interval.  Taranenko
\cite[Proposition~3.2]{Taranenko2020} identified the corresponding one-coordinate
$K_2$ factor.  We identify the free coordinates intrinsically as the classes
that are peripheral on both sides and extract all of them simultaneously.  If
their number is $b$, then
\[
 G\cong G_0\square Q_b,
\]
where $G_0$ has a unique daisy root.  The roots induce the displayed convex
$Q_b$-factor, and the equivalence classes of minimum-dimensional proper
embeddings are in bijection with them.  The purpose of this formulation is to
combine the earlier root results into a canonical simultaneous factorization
with an intrinsic halfspace description.

Since daisy cubes are closed under restrictions to halfspaces and
contractions of $\Theta$-classes, they admit a family of minimal forbidden
partial cube-minors. Xie and Xu \cite[Section~4]{XieXu2025} recorded the
families $Q_n^{-+}$, $Q_n^{--}$, and $H_n^{--}$ and asked for a forbidden
pc-minor characterization. Merely defining the set of all minimal
non-daisy pc-minors does not enumerate it; a useful step is therefore to
classify natural, sufficiently broad sources of candidate obstructions.

The Cartesian product of graphs $G$ and $H$ is denoted by $G\square H$
\cite{HammackImrichKlavzar2011,ImrichKlavzar2000}.  We write
$G^{\square r}$ for the $r$-fold product and use $G^{\square0}=K_1$.

The principal contribution is the following complete deletion analysis for
\[
 B_{r,s}=P_3^{\square r}\square Q_s.
\]
For arbitrary distinct vertices $x,y$, we:
\begin{enumerate}
\item characterize exactly when $B_{r,s}-\{x,y\}$ is isometric in the
      ambient product;
\item classify all corner deletions that remain partial cubes, including the
      unique non-isometric parameter pattern;
\item prove that every non-isometric noncorner partial-cube deletion satisfies
      $r+s\leq3$, reducing an infinite problem to forty automorphism orbits;
\item certify that precisely eleven of these orbits are partial cubes and that
      none is daisy or pc-minor-minimal; and
\item derive the complete deletion classification and exact counting formulas.
\end{enumerate}
The result contains the previously known graphs $Q_s^{--}$,
$H_n^{--}$, and $P_4$ from Xie and Xu \cite[Section~4]{XieXu2025}; its
higher-$P_3$ branch is new.  Section~2 fixes notation, Section~3 gives the
root-cube refinement, Section~4 proves the deletion classification, and
Appendix~A records the reproducible finite verification.

\section{Notation and preliminaries}

For $n\ge0$, the hypercube $Q_n$ has vertex set $\cube^n$; two binary strings are adjacent if they differ in exactly one coordinate. We set $Q_0=K_1$. A subgraph $H$ of a graph $G$ is \emph{isometric} if $d_H(x,y)=d_G(x,y)$ for all $x,y\in V(H)$. A \emph{partial cube} is a graph isomorphic to an isometric subgraph of a hypercube.

For edges $ab,xy\in E(G)$ of a connected graph $G$, the Djokovi\'c--Winkler relation is
\[
 ab\,\Theta_G\,xy
 \quad\Longleftrightarrow\quad
 d(a,x)+d(b,y)\ne d(a,y)+d(b,x).
\]
When no confusion is possible, we write $\Theta$ instead of $\Theta_G$. A connected bipartite graph is a partial cube if and only if $\Theta$ is an equivalence relation on its edge set \cite{Djokovic1973,Winkler1984}. The equivalence classes are called $\Theta$-classes. In a partial cube, the number of $\Theta$-classes is the isometric dimension, that is, the dimension of the standard hypercube embedding.

Let $G$ be a partial cube and let $ab\in E(G)$. Define
\begin{align*}
 W_{ab}&=\{x\in V(G):d(a,x)<d(b,x)\},\\
 W_{ba}&=\{x\in V(G):d(b,x)<d(a,x)\}.
\end{align*}
These sets are complementary convex halfspaces of $G$. The $\Theta$-class containing $ab$ is
\[
 F_{ab}=\{xy\in E(G): x\in W_{ab}\text{ and }y\in W_{ba}\}.
\]
For the same oriented edge $ab$, put
\[
 U_{ab}=\{x\in W_{ab}:x\text{ is incident with an edge of }F_{ab}\},
\]
and define $U_{ba}$ analogously. Equivalently, if $\theta$ is a $\Theta$-class and $W$ is one of its two halfspaces, then
\[
 U(\theta,W)=\{x\in W:x\text{ is incident with an edge of }\theta\}.
\]
Thus $U_{ab}=U(F_{ab},W_{ab})$.

\begin{definition}[\cite{Taranenko2020,XieXu2025}]\label{def:peripheral}
A $\Theta$-class $\theta$ of a partial cube is \emph{peripheral} if $U(\theta,W)=W$ for at least one of the two halfspaces $W$ determined by $\theta$.
\end{definition}

Here the definition is used for arbitrary partial cubes, without a
median-graph assumption.

For a partial cube $G$ with $m$ $\Theta$-classes, choose for the $i$th class one halfspace $W_i^0$ and denote the other one by $W_i^1$. The standard $\Theta$-coordinate label of a vertex $v$ is the binary string $\lambda(v)\in\cube^m$ given by
\[
 \lambda(v)_i=\varepsilon
 \quad\Longleftrightarrow\quad
 v\in W_i^\varepsilon,
 \qquad \varepsilon\in\{0,1\}.
\]
The map $\lambda$ is an isometric embedding into $Q_m$. Moreover, an edge of $G$ belongs to the $i$th $\Theta$-class exactly when the labels of its endpoints differ only in coordinate $i$.

The coordinatewise order on $\cube^n$ is denoted by $\le$. Thus, for $x=(x_1,\ldots,x_n)$ and $y=(y_1,\ldots,y_n)$, we have $y\le x$ if $y_i\le x_i$ for all $i$. For $X\subseteq\cube^n$, write
\[
 \dn X=\{y\in\cube^n:y\le x\text{ for some }x\in X\}.
\]
A nonempty set $D\subseteq\cube^n$ is a \emph{down-set} if $D=\dn D$.

\begin{definition}[\cite{KlavzarMollard2019}]\label{def:daisy}
A graph is a \emph{daisy cube} if it is isomorphic to $Q_n[D]$ for some $n\ge0$ and some nonempty down-set $D\subseteq\cube^n$. Equivalently, it is isomorphic to $Q_n[\dn X]$ for some $X\subseteq\cube^n$.
\end{definition}

Let $G$ be a daisy cube with $m$ $\Theta$-classes. A vertex $r\in V(G)$ is a
\emph{daisy root} if there is an isometric embedding
$\lambda:V(G)\to\cube^m$ such that $\lambda(V(G))$ is a down-set and
$\lambda(r)=0^m$. We denote the set of daisy roots by $\cR(G)$. These are
the minimal vertices associated with proper embeddings in the terminology
of Vesel \cite{Vesel2021}. We use the isometric dimension $m$ in this
definition, so inactive coordinates are excluded.

A \emph{restriction} of a partial cube $G$ with respect to a $\Theta$-class is the subgraph induced by one of the two halfspaces of that class. A \emph{contraction} of $G$ with respect to a $\Theta$-class is obtained by contracting all edges of that class and then suppressing loops and parallel edges. A \emph{partial cube-minor}, abbreviated \emph{pc-minor}, is any graph obtained by a finite sequence of restrictions and contractions. Such a pc-minor is \emph{proper} if it is not isomorphic to the original graph. Restrictions and contractions preserve partial cubes \cite{Marc2018,ChepoiKnauerMarc2020}, and daisy cubes are pc-minor-closed \cite{KlavzarMollard2019,Taranenko2020,XieXu2025}.

The following standard inheritance property will be used repeatedly; see,
for example, \cite{Djokovic1973,ImrichKlavzar2000}.

\begin{lemma}\label{lem:isometric-theta}
Let $H$ be an isometric subgraph of a partial cube $G$. If $e,f\in E(H)$, then
\[
 e\,\Theta_H\,f\quad\Longleftrightarrow\quad e\,\Theta_G\,f.
\]
Consequently, each $\Theta$-class of $H$ is the nonempty intersection of $E(H)$ with a $\Theta$-class of $G$, and the halfspaces of $H$ are obtained by intersecting the corresponding halfspaces of $G$ with $V(H)$.
\end{lemma}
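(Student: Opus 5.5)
The plan is to prove the distance equality first and derive the $\Theta$-statement and the halfspace statement from it. Throughout, let $d_G$ and $d_H$ denote the two distance functions. Since $H$ is isometric in $G$, we have $d_H(u,v)=d_G(u,v)$ for all $u,v\in V(H)$.

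Take edges $e=ab$ and $f=xy$ of $H$. The Djokovi\'c--Winkler condition $e\,\Theta_H\,f$ reads
\[
 d_H(a,x)+d_H(b,y)\ne d_H(a,y)+d_H(b,x).
\]
All four points $a,b,x,y$ lie in $V(H)$, so each term may be replaced by the corresponding $d_G$-distance. The condition then becomes literally the defining condition for $e\,\Theta_G\,f$, which proves the equivalence. Two remarks make the rest go through:
\begin{itemize}
\item $H$ is a partial cube, since it is isometric in $G$ and $G$ is isometric in a hypercube.
\item $H$ is bipartite and connected, so $\Theta_H$ is an equivalence relation.
\end{itemize}

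Next we identify the $\Theta$-classes of $H$. Because $\Theta_H$ coincides with the restriction of $\Theta_G$ to $E(H)$, the classes of $H$ are exactly the equivalence classes of that restricted relation. Fix $e\in E(H)$ with $G$-class $F$. Then $F\cap E(H)$ is precisely the set of edges of $H$ that are $\Theta_G$-related to $e$, and hence $\Theta_H$-related to $e$. So the $\Theta_H$-class of $e$ equals $F\cap E(H)$, which is nonempty because it contains $e$. Conversely, every nonempty intersection $F\cap E(H)$ arises this way from any edge it contains.

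For the halfspaces, let $ab\in E(H)$. By definition,
\[
 W^H_{ab}=\{v\in V(H): d_H(a,v)<d_H(b,v)\}.
\]
Isometry turns the defining inequality into $d_G(a,v)<d_G(b,v)$, so $W^H_{ab}=W^G_{ab}\cap V(H)$, and similarly for $W^H_{ba}$. The halfspaces of a class depend only on the class and not on the chosen edge, since they are determined by $F_{ab}$ as the sides of the cut. Hence the halfspaces of the $H$-class $F\cap E(H)$ are the traces on $V(H)$ of the two halfspaces of $F$.

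There is no real obstacle here. The one point that deserves care is that the statement about halfspaces refers to classes rather than edges, and this independence from the chosen edge is a standard partial-cube fact that the paper has already taken for granted.
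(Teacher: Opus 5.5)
Your proof is correct and is the standard argument. The paper gives no proof of its own: it quotes the lemma as a standard fact (Djokovi\'c; Imrich--Klav\v{z}ar). Substituting $d_H=d_G$ into the defining formulas for $\Theta$ and for $W_{ab}$ is the routine verification behind that citation.

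One cosmetic point: bipartiteness and connectedness alone do not make $\Theta_H$ an equivalence relation. It follows from your partial-cube bullet together with Winkler's theorem, or more directly because $\Theta_H$ is the restriction of the equivalence relation $\Theta_G$ to $E(H)$.
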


\section{Peripheral classes and the root-cube decomposition}

We begin by isolating the exact previously known ingredient.  The forward
implication below is Taranenko's Proposition~2.1
\cite{Taranenko2020}.  The reverse implication is the coordinate-orientation
argument in the proof of Xie and Xu's Theorem~1.2(iv)$\Rightarrow$(v)
\cite{XieXu2025}; that argument does not use medianity and therefore applies
verbatim to every finite partial cube.  We therefore quote the resulting
characterization without repeating its proof.

\begin{theorem}\label{thm:peripheral-characterization}
A finite partial cube is a daisy cube if and only if every $\Theta$-class is peripheral.
\end{theorem}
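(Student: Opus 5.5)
Both directions go through the standard $\Theta$-coordinate labelling $\lambda$ described in Section~2, using that any isometric embedding into $Q_m$ with $m$ the isometric dimension is such a labelling up to a choice of orientation of each coordinate.

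\emph{Forward direction.} Let $G\cong Q_m[D]$ with $D$ a down-set and $m$ the isometric dimension. By Lemma~\ref{lem:isometric-theta}, the $\Theta$-classes of $G$ are the coordinate directions $i$ that actually occur. The halfspaces of class $i$ are $W_i^0=\{v\in D:v_i=0\}$ and $W_i^1=\{v\in D:v_i=1\}$. I claim $U(\theta_i,W_i^1)=W_i^1$. Take $v\in D$ with $v_i=1$ and let $v'$ be $v$ with coordinate $i$ switched to $0$. Then $v'\le v$, so $v'\in D$. Since $v$ and $v'$ are adjacent in $Q_m[D]$, the edge $vv'$ lies in $\theta_i$, so $v$ is incident with an edge of $\theta_i$. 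Hence every class is peripheral.

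\emph{Reverse direction.} Suppose every $\Theta$-class $\theta_i$ is peripheral. For each $i$, choose a halfspace $W$ with $U(\theta_i,W)=W$ and call it $W_i^1$; call the other halfspace $W_i^0$. Let $\lambda$ be the resulting standard labelling, which is an isometric embedding into $Q_m$. I will show that $D=\lambda(V(G))$ is a down-set.

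It suffices to show that if $x\in D$ and $x_i=1$, then $x-e_i\in D$; downward closure then follows by iterating. Let $v$ be the vertex with $\lambda(v)=x$. Then $v\in W_i^1=U(\theta_i,W_i^1)$, so $v$ has a neighbour $w$ with $vw\in\theta_i$. Edges of $\theta_i$ change exactly coordinate $i$ of the label, so $\lambda(w)=x-e_i\in D$.

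Hence $G\cong Q_m[D]$ with $D$ a down-set, and $G$ is a daisy cube. $D$ is nonempty since $G$ is.

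\emph{Main obstacle.} The only delicate point is the identification, in the forward direction, of the $\Theta$-classes and halfspaces of $Q_m[D]$ with its coordinates. This is exactly what Lemma~\ref{lem:isometric-theta} provides, once we recall that an induced down-set subgraph is isometric in $Q_m$. To see the isometry, take $u,v\in D$ and pass from $u$ to $u\wedge v\in D$ by switching coordinates of $u$ to $0$, then from $u\wedge v$ up to $v$ by switching coordinates to $1$. Every vertex on this route lies below $u$ or below $v$, hence in $D$, and the route has length equal to the Hamming distance of $u$ and $v$.

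Inactive coordinates cause no trouble: choosing $m$ to be the isometric dimension discards them, and projecting out a coordinate that is constant (necessarily $0$, since $D$ is a down-set) preserves the down-set property. This is the statement's content; it is essentially the argument attributed to Taranenko and to Xie--Xu.
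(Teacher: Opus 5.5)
Your proof is correct and follows the same route the paper relies on. The paper does not reprove this theorem. It cites Taranenko's Proposition~2.1 for the forward direction and the Xie--Xu coordinate-orientation argument for the converse: orient each class so that its peripheral side is the $1$-side, then lower coordinates one at a time. That is exactly your reverse direction, and the paper reuses the same coordinate-lowering step in the proof of Theorem~\ref{thm:root-criterion}.

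Your write-up also supplies details the paper leaves implicit. You show that a down-set induces an isometric subgraph of $Q_m$, so that Lemma~\ref{lem:isometric-theta} identifies the $\Theta$-classes with the active coordinates, and you note that constant coordinates can be discarded.
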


For a $\Theta$-class $\theta$ and $r\in V(G)$, write $W_\theta(r)$ for the
halfspace of $\theta$ containing $r$ and $\overline W_\theta(r)$ for the
opposite halfspace.

Vesel \cite[Theorem~1 and Lemma~2]{Vesel2021} showed, in the language of a
fixed proper embedding and its maximal labels, that the minimal vertices form
a Boolean interval and yield equivalent proper embeddings.  Taranenko
\cite[Proposition~3.2]{Taranenko2020} observed the associated $K_2$ factor for
one two-sided peripheral class.  We now give an intrinsic root test and
extract all such factors simultaneously.

\begin{theorem}[Root criterion]\label{thm:root-criterion}
Let $G$ be a finite daisy cube and $r\in V(G)$. Then
\[
 r\in\cR(G)
 \quad\Longleftrightarrow\quad
 U\bigl(\theta,\overline W_\theta(r)\bigr)
   =\overline W_\theta(r)
 \quad\text{for every $\Theta$-class $\theta$.}
\]
\end{theorem}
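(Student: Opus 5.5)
Both directions go through the standard $\Theta$-coordinate labelling $\lambda$ of Section~2, choosing orientations so that $\lambda(r)=0^m$. The key observation is that any isometric embedding of $G$ into $Q_m$, with $m$ the isometric dimension, is the standard labelling up to an automorphism of $Q_m$. Such an embedding sends each $\Theta$-class to one coordinate, by Lemma~\ref{lem:isometric-theta} applied to $G$ inside $Q_m$. Since every coordinate is active, it is a bijection between classes and coordinates, and each halfspace maps to a coordinate half. If in addition the embedding sends $r$ to $0^m$, then the coordinate bits are forced: for every class $\theta$, the halfspace $W_\theta(r)$ receives bit $0$ and $\overline W_\theta(r)$ receives bit $1$. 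So $r\in\cR(G)$ if and only if this particular labelling $\lambda_r$ has down-set image. The theorem therefore reduces to the following claim: $\lambda_r(V(G))$ is a down-set if and only if $U(\theta,\overline W_\theta(r))=\overline W_\theta(r)$ for every $\theta$.

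For ($\Rightarrow$), suppose $\lambda_r(V(G))$ is a down-set and let $v\in\overline W_\theta(r)$. Then $\lambda_r(v)$ has a $1$ in the coordinate of $\theta$. Flipping that bit gives a smaller string, which lies in the image by the down-set property; call its preimage $u$. The vertices $u$ and $v$ are at Hamming distance $1$, hence adjacent, and the edge $uv$ lies in $\theta$ because the labels differ only in that coordinate. Hence $v\in U(\theta,\overline W_\theta(r))$.

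For ($\Leftarrow$), suppose the condition holds for every $\theta$. It suffices to show that for each vertex $v$ and each coordinate $i$ with $\lambda_r(v)_i=1$, the string with bit $i$ cleared is in the image; iterating then gives closure under $\le$. Let $\theta_i$ be the class of coordinate $i$. Then $v\in\overline W_{\theta_i}(r)$, so by hypothesis $v$ is incident with an edge $vu\in\theta_i$. Edges of $\theta_i$ flip exactly coordinate $i$, so $\lambda_r(u)$ is the required string. The image also contains $0^m=\lambda_r(r)$ and is nonempty, so it is a down-set and $r$ is a daisy root.

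The main point needing care is the first paragraph's uniqueness claim. Any down-set embedding witnessing $r\in\cR(G)$ must be a coordinate permutation of $\lambda_r$, with no freedom in the bit orientations; otherwise the criterion would depend on the chosen embedding. I would justify this with Lemma~\ref{lem:isometric-theta}, because $Q_m$ is a partial cube whose $\Theta$-classes are the coordinate directions, together with the fact that $m$ equals the number of classes. Coordinate permutations preserve down-sets, so the reduction is valid. Notably, Theorem~\ref{thm:peripheral-characterization} is not needed here beyond the existence of roots.
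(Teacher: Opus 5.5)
Your proof is correct and follows essentially the same route as the paper. The forward direction uses downward closure to produce the coordinate-lowering neighbour, and the converse orients every $\overline W_\theta(r)$ as the $1$-side and checks single-coordinate lowering. Your version is somewhat more self-contained than the paper's in two places:
\begin{itemize}
\item You prove the lowering-implies-down-set step directly, whereas the paper cites the Xie--Xu argument for it.
\item Via Lemma~\ref{lem:isometric-theta}, you make explicit that any down-set embedding with $r\mapsto 0^m$ is a coordinate permutation of $\lambda_r$. The paper uses this tacitly when it takes the witnessing embedding to be a $\Theta$-coordinate one.
\end{itemize}
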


\begin{proof}
Suppose that $r$ is a daisy root, and choose a proper $\Theta$-coordinate
embedding with $r$ labelled $0^m$. For every coordinate class, the
halfspace opposite $r$ is the $1$-side. Downward closure shows that every
vertex on this side has its coordinate-lowering neighbor, so the displayed
equality holds.

Conversely, orient $\overline W_\theta(r)$ as the $1$-side for every
$\theta$.  The assumed equalities are precisely the coordinate-lowering
condition in the argument cited before
Theorem~\ref{thm:peripheral-characterization}.  Hence the resulting standard
embedding has a down-set as its image. Since
$r$ lies on every $0$-side, its label is $0^m$, and therefore
$r\in\cR(G)$.
\end{proof}

A $\Theta$-class is \emph{two-sided peripheral} if both of its halfspaces
are equal to their corresponding $U$-sets. The set of all such classes is
denoted by $\mathcal B(G)$.

\begin{theorem}[Root-cube factorization]\label{thm:root-cube}
Let $G$ be a finite daisy cube and put $b=|\mathcal B(G)|$. Then there is
a daisy cube $G_0$, unique up to isomorphism, such that
\[
 G\cong G_0\square Q_b,
\]
where $G_0$ has no two-sided peripheral $\Theta$-class and has a unique
daisy root $r_0$. Under this decomposition,
\[
 \cR(G)=\{r_0\}\times V(Q_b).
\]
In particular, $G[\cR(G)]$ is a convex $b$-cube and
$|\cR(G)|=2^b$.
\end{theorem}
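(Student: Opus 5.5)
Fix a daisy root $r\in\cR(G)$ and a proper embedding $\lambda:V(G)\to\cube^m$ with $\lambda(r)=0^m$, oriented as in the root criterion (Theorem~\ref{thm:root-criterion}). Let $D=\lambda(V(G))$; it is a down-set. Split the coordinates into $B$, the classes in $\mathcal B(G)$, and $A$, the remaining ones, with $|B|=b$. The plan has four steps: prove a product decomposition of $D$, read off the factorization, identify the roots of $G_0$, and then compute $\cR(G)$ via the root criterion.

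\textbf{Step 1: $D$ is a product.} The key claim is
\[
 D=D_A\times\cube^B,\qquad D_A=\{x\in\cube^A: (x,0^B)\in D\}.
\]
Take $i\in B$. The $0$-side of class $i$ equals its $U$-set, because $i$ is two-sided peripheral. So every $v\in D$ with $v_i=0$ has a neighbour in $D$ differing only in coordinate $i$. Thus $D$ is closed under setting any single $B$-coordinate to $1$. Downward closure lets us set $B$-coordinates to $0$. Iterating over the $B$-coordinates gives the claim. Since an induced subgraph of $Q_m$ on a product set is the product of the induced subgraphs, $G\cong G_0\square Q_b$ with $G_0=Q_{|A|}[D_A]$. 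Here $D_A$ is a down-set, so $G_0$ is a daisy cube. Each coordinate in $A$ is active in $D_A$ because it is active in $D$, so $G_0$ has isometric dimension $|A|$.

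\textbf{Step 2: $\Theta$-classes and peripherality in the product.} The $\Theta$-classes of $G_0\square Q_b$ are the classes of $G_0$ lifted to the product, together with the $b$ cube coordinates. Halfspaces and $U$-sets of a lifted class are products with $V(Q_b)$. Therefore a class of $G_0$ is two-sided peripheral in $G_0$ if and only if it is two-sided peripheral in $G$. By the definition of $A$, $G_0$ has no two-sided peripheral class.

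\textbf{Step 3: roots of $G$ and uniqueness of $G_0$.} By the root criterion, a vertex $(u,q)$ is a root of $G$ exactly when the one-sided condition holds for every class. For the cube classes this holds automatically. For a lifted class it is equivalent to the condition on $u$ in $G_0$. Hence
\[
 \cR(G)=\cR(G_0)\times V(Q_b).
\]
Uniqueness of $G_0$ then follows from Cartesian cancellation for finite connected graphs: $G_0\square Q_b\cong G_0'\square Q_b$ implies $G_0\cong G_0'$ (see \cite{HammackImrichKlavzar2011}). Alternatively, $G_0$ is the graph induced by any one fibre of the intrinsically defined class set $\mathcal B(G)$.

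\textbf{Step 4 (the main obstacle): $G_0$ has a unique root.} We know $0^A\in\cR(G_0)$. Suppose $r'\neq 0^A$ is another root, and let $i$ be a coordinate with $r'_i=1$. Apply the root criterion to $r'$ and class $i$. The halfspace opposite $r'$ is the $0$-side of $i$, so that side equals its $U$-set. The $1$-side equals its $U$-set because $0^A$ is a root. Hence class $i$ is two-sided peripheral in $G_0$, contradicting Step 2. So $\cR(G_0)=\{r_0\}$ and $\cR(G)=\{r_0\}\times V(Q_b)$. This is a fibre $Q_b$ of the product and has $2^b$ vertices. It is convex because it is an intersection of halfspaces, namely those containing $r_0$ for every lifted class.

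The subtle points are in Step 1: orienting correctly so that the $0$-side of a $B$-class is full, and noting that activity of the coordinates is preserved so that dimensions match the root definition.
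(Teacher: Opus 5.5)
Your proof is correct and takes essentially the same route as the paper: you split off the two-sided peripheral coordinates because the down-set is closed under toggling them, you transfer two-sidedness through the product, and you use the root criterion to force the orientations of the remaining classes and compute $\cR(G)$. The only real difference is that you get uniqueness of $G_0$ from Cartesian cancellation for connected graphs, whereas the paper identifies $G_0$ as the contraction of the intrinsic class set $\mathcal B(G)$. Cancellation is slightly cleaner, since it covers every $G_0'$ with $G\cong G_0'\square Q_b$ directly.
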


\begin{proof}
Let $\theta_1,\ldots,\theta_m$ be the $\Theta$-classes. Orient a peripheral
halfspace of each class as its $1$-side and let
$D\subseteq\cube^m$ be the image of the standard embedding. By
Theorem~\ref{thm:peripheral-characterization}, $D$ is a down-set. Reorder
the coordinates so that the last $b$ coordinates correspond to
$\mathcal B(G)$.

If $i$ is one of these last coordinates and $x\in D$, then
$x\oplus e_i\in D$. Indeed, when $x_i=1$ this follows from peripheralness
of the $1$-side, and when $x_i=0$ it follows from peripheralness of the
$0$-side. Thus $D$ is invariant under independently flipping the last
$b$ coordinates. If $D_0$ is its projection onto the first $m-b$
coordinates, then
\[
 D=D_0\times\cube^b.
\]
The set $D_0$ is a down-set, and hence
$G\cong Q_{m-b}[D_0]\square Q_b$. Put $G_0=Q_{m-b}[D_0]$.

A remaining coordinate is two-sided peripheral in $G_0$ if and only if
the corresponding coordinate is two-sided peripheral in
$G_0\square Q_b$. Hence $G_0$ has no such class. Every $\Theta$-class of
$G_0$ consequently has exactly one peripheral side. By
Theorem~\ref{thm:root-criterion}, a daisy root must lie on the opposite
side of each class, so all coordinate orientations are forced. Existence
follows from the down-set representation, and injectivity of the standard
embedding gives a unique root $r_0$.

For the $b$ two-sided classes either orientation is allowed, independently,
whereas all other orientations are forced by $r_0$. The root criterion
therefore gives
$\cR(G)=\{r_0\}\times V(Q_b)$. This set induces a convex $Q_b$-fiber.
Finally, $\mathcal B(G)$ is intrinsic and $G_0$ is obtained by contracting
all its classes, so $G_0$ is unique up to isomorphism.
\end{proof}

\begin{corollary}\label{cor:embedding-count}
If $b=|\mathcal B(G)|$, then a finite daisy cube $G$ has exactly $2^b$
equivalence classes of minimum-dimensional proper embeddings, where
embeddings that differ only by a permutation of coordinates are identified.
\end{corollary}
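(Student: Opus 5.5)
The plan is to identify each equivalence class of minimum-dimensional proper embeddings with a daisy root. The dimension is fixed at $m$, so we work with the isometric dimension. First, any minimum-dimensional proper embedding $\mu:V(G)\to\cube^m$ is an isometric embedding into $Q_m$ with exactly $m$ active coordinates. Hence, up to a permutation of coordinates, $\mu$ equals the standard $\Theta$-coordinate embedding $\lambda$ for some orientation of the classes. This is the standard uniqueness of isometric hypercube embeddings of partial cubes: each coordinate must cut along a $\Theta$-class, and distinct coordinates must correspond to distinct classes because the dimension is $m$. I will invoke this via Lemma~\ref{lem:isometric-theta} applied to $\mu(G)\subseteq Q_m$, whose $\Theta$-classes are the coordinate classes of $Q_m$ intersected with $E(\mu(G))$. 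An equivalence class is therefore the same thing as a choice of orientation $(W_i^0,W_i^1)_i$ of the $\Theta$-classes for which the image of $\lambda$ is a down-set.

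Second, I will define a map from equivalence classes to $\cR(G)$ by sending such an embedding to the unique vertex labelled $0^m$. This vertex exists because the image is a nonempty down-set, and it is unique by injectivity. It is invariant under coordinate permutation, so the map is well defined, and by definition the vertex is a daisy root.

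Third, I will show the map is a bijection. It is surjective by the definition of $\cR(G)$. For injectivity, if two proper embeddings send the same vertex $r$ to $0^m$, then in both every class $\theta$ is oriented with $W_\theta(r)$ as its $0$-side. The orientations therefore coincide, and the embeddings agree up to the permutation identification. Counting then gives $|\cR(G)|=2^b$ by Theorem~\ref{thm:root-cube}.

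The main obstacle is the first step: justifying that every minimum-dimensional proper embedding is a reoriented, permuted standard embedding, so that orientations are the only freedom. If a cleaner route is wanted, I would argue directly. Each coordinate $i$ of $\mu$ splits $V(G)$ into two sets, and the edges flipping coordinate $i$ form a union of $\Theta$-classes by Lemma~\ref{lem:isometric-theta}. Since there are $m$ coordinates and $m$ classes, and every class must be hit by some coordinate, each coordinate corresponds to exactly one class.
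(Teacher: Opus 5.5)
Your proposal is correct and follows essentially the same route as the paper. The vertex labelled $0^m$ gives a bijection between equivalence classes of minimum-dimensional proper embeddings and daisy roots, since a fixed root forces every orientation and leaves only a coordinate permutation, and Theorem~\ref{thm:root-cube} then gives $|\cR(G)|=2^b$. Your extra justification that every such embedding is a permuted, reoriented standard embedding is a detail the paper leaves implicit; note that Lemma~\ref{lem:isometric-theta} shows the edges flipping one coordinate form a \emph{single} $\Theta$-class, not merely a union, and this, rather than the count in your last paragraph alone, is what makes the coordinate--class correspondence bijective.
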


\begin{proof}
The all-zero vertex of a proper embedding is a daisy root. Conversely,
Theorem~\ref{thm:root-criterion} constructs a proper embedding from every
root. Once the root is fixed, each $\Theta$-class determines its coordinate
and its orientation, leaving only a permutation of coordinates. Apply
Theorem~\ref{thm:root-cube}; compare Vesel
\cite[Theorem~1]{Vesel2021}.
\end{proof}

\begin{corollary}[Certificate form]\label{cor:certificate}
For a finite partial cube with its $\Theta$-classes and halfspaces given,
exactly one of the following certificates exists:
\begin{enumerate}
\item a choice of a peripheral side for every class, which produces a
      proper down-set embedding;
\item a class $\theta$ and vertices
      $x_0\in W_0\setminus U(\theta,W_0)$ and
      $x_1\in W_1\setminus U(\theta,W_1)$, which certify that the graph is
      not a daisy cube.
\end{enumerate}
\end{corollary}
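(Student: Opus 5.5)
The plan is to read Corollary~\ref{cor:certificate} as a restatement of Theorem~\ref{thm:peripheral-characterization} in which the negation of ``every class is peripheral'' is made explicit. The proof then has two parts: at least one certificate exists, and the two certificates cannot both exist.

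For existence, fix a finite partial cube $G$ with its $\Theta$-classes and halfspaces. Suppose every class $\theta$ is peripheral. Then for each $\theta$ we choose a halfspace $W$ with $U(\theta,W)=W$ and orient it as the $1$-side. The orientation argument cited before Theorem~\ref{thm:peripheral-characterization} (the reverse implication) shows that the image of the standard $\Theta$-coordinate embedding is then a down-set. This is the same step used in the proof of Theorem~\ref{thm:root-cube}, and it gives certificate~1. Otherwise some class $\theta$ with halfspaces $W_0,W_1$ is not peripheral. By Definition~\ref{def:peripheral} we then have $U(\theta,W_0)\neq W_0$ and $U(\theta,W_1)\neq W_1$. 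Since $U(\theta,W_\varepsilon)\subseteq W_\varepsilon$ by definition, we can pick $x_\varepsilon\in W_\varepsilon\setminus U(\theta,W_\varepsilon)$ for $\varepsilon=0,1$. This is certificate~2.

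For exclusivity, I would observe that certificate~2 says exactly that $\theta$ is not peripheral. If certificate~1 also existed, the graph would be a daisy cube. Theorem~\ref{thm:peripheral-characterization} (the forward direction, Taranenko's result) would then make every class peripheral, a contradiction. A direct argument also works. In a down-set embedding, with the chosen side of $\theta$ as the $1$-coordinate, the vertex $x_\varepsilon$ lying on the $1$-side has label with a $1$ in that coordinate. Lowering that coordinate stays inside the down-set, and the lowered vertex is a neighbour across $\theta$. So $x_\varepsilon\in U(\theta,W_\varepsilon)$, which contradicts the choice of $x_\varepsilon$.

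There is one subtle point. Certificate~1 says ``produces a proper down-set embedding'', so the chosen sides must genuinely work. This relies on the cited coordinate-lowering argument applying to arbitrary partial cubes, which the paper already assumes. I would also note that the certificate-2 condition can be checked from the halfspaces alone, so the dichotomy is effective.
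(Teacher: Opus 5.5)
Your proposal is correct and is the paper's argument spelled out: the corollary is Theorem~\ref{thm:peripheral-characterization}, with the coordinate-lowering orientation argument giving certificate~1, plus the observation that a non-peripheral class has a witness not incident with $\theta$ on each of its two sides. Exclusivity is even easier than you make it: certificate~1 already requires a peripheral side of $\theta$, and certificate~2 rules that out, so you need neither the forward direction of the theorem nor the down-set argument.
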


\begin{proof}
This is Theorem~\ref{thm:peripheral-characterization}, with a failed
peripheral test witnessed on both sides of one class.
\end{proof}

Let $\cO_{\mathrm D}$ be the set of all partial cubes $H$ such that $H$ is not a daisy cube and every proper pc-minor of $H$ is a daisy cube. These graphs are the minimal forbidden pc-minors for the class of daisy cubes.

\begin{corollary}\label{cor:obstruction-set}
For a finite partial cube $G$,
\[
\begin{aligned}
 G\text{ is a daisy cube}
 \quad\Longleftrightarrow\quad
 &G\text{ has no pc-minor isomorphic}\\
 &\text{to a member of }\cO_{\mathrm D}.
\end{aligned}
\]
Moreover, $\cO_{\mathrm D}$ is exactly the set of pc-minor-minimal partial cubes that contain a non-peripheral $\Theta$-class.
\end{corollary}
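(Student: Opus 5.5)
The plan is to obtain the equivalence from two facts. First, the class of daisy cubes is closed under pc-minors. Second, pc-minor containment is a well-founded partial order on finite partial cubes. Then I identify the minimal non-daisy partial cubes through Theorem~\ref{thm:peripheral-characterization}.

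\emph{Forward direction.} Suppose $G$ is a daisy cube and has a pc-minor $H\in\cO_{\mathrm D}$. Since daisy cubes are pc-minor-closed, $H$ would be a daisy cube. This contradicts $H\in\cO_{\mathrm D}$, whose members are by definition not daisy cubes.

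\emph{Reverse direction.} Suppose $G$ is a partial cube that is not a daisy cube. Consider the set $\mathcal S$ of all pc-minors of $G$ that are not daisy cubes. This set is nonempty, since $G\in\mathcal S$, and every member is a partial cube, because restrictions and contractions preserve partial cubes.
\begin{itemize}
\item A proper restriction or contraction strictly decreases $|V|+|E|$. A contraction identifies the endpoints of a nonempty edge class, and a restriction drops a nonempty halfspace.
\item Choose $H\in\mathcal S$ minimizing $|V(H)|+|E(H)|$.
\item Every proper pc-minor $H'$ of $H$ satisfies $|V(H')|+|E(H')|<|V(H)|+|E(H)|$. Being non-isomorphic to $H$ forces at least one operation in the defining sequence to change the graph, and no operation increases the size. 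Also, $H'$ is a pc-minor of $G$ by transitivity, since sequences of operations compose.
\item By minimality, $H'$ is therefore a daisy cube. Hence $H\in\cO_{\mathrm D}$ and $H$ is a pc-minor of $G$.
\end{itemize}
The one point needing care is that a non-isomorphic pc-minor is strictly smaller. I would argue it by contraposition: a restriction or contraction that does not decrease size is the identity. This happens only when the graph has no classes, that is, when it is $K_1$, where both operations return $K_1$.

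\emph{The final sentence of the corollary.} By Theorem~\ref{thm:peripheral-characterization}, a finite partial cube is non-daisy exactly when it has a non-peripheral $\Theta$-class. So the minimal members of $\cO_{\mathrm D}$ under pc-minor order are exactly the pc-minor-minimal partial cubes having a non-peripheral $\Theta$-class. Here "minimal" means that every proper pc-minor has all classes peripheral.

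I expect no serious obstacle. The only delicate step is the size-decrease/well-foundedness remark, together with the fact that daisy-ness is an isomorphism invariant.
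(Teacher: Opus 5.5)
Your proposal is correct and follows the paper's proof essentially step for step. It uses pc-minor-closedness of daisy cubes for the forward direction, a non-daisy pc-minor of minimum $|V|+|E|$ for the converse, and Theorem~\ref{thm:peripheral-characterization} to translate ``non-daisy'' into ``has a non-peripheral $\Theta$-class'' for the final sentence.

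Your extra check that every proper pc-minor is strictly smaller (any restriction or contraction along an existing $\Theta$-class removes a vertex) only makes explicit what the paper leaves implicit. One wording fix: your phrase ``minimal members of $\cO_{\mathrm D}$'' should read ``members of $\cO_{\mathrm D}$'', since $\cO_{\mathrm D}$ already consists of the minimal non-daisy partial cubes.
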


\begin{proof}
Because daisy cubes are pc-minor-closed, a daisy cube cannot have a pc-minor in $\cO_{\mathrm D}$. Conversely, if $G$ is not a daisy cube, then among all non-daisy pc-minors of $G$ choose one with minimum $|V|+|E|$. This chosen pc-minor belongs to $\cO_{\mathrm D}$. It follows directly from Theorem~\ref{thm:peripheral-characterization} that, within finite partial cubes, being non-daisy is equivalent to having at least one non-peripheral $\Theta$-class.
\end{proof}

\begin{remark}\label{rem:enumeration}
Corollary~\ref{cor:obstruction-set} is a formal obstruction-set
reformulation, not an enumeration of $\cO_{\mathrm D}$. A full forbidden
pc-minor theorem still requires the classification of pc-minor-minimal
partial cubes with a non-peripheral $\Theta$-class. Section~4 gives such a
classification inside a broad two-vertex-deletion model.
\end{remark}

\section{Two-vertex deletions from daisy grids}

For integers \(r,s\ge0\), put
\[
 B_{r,s}=P_3^{\square r}\square Q_s.
\]
We use the coordinate model
\[
 V(B_{r,s})=\{0,1,2\}^r\times\{0,1\}^s,
\]
where two vertices are adjacent if exactly one coordinate changes by \(1\).
A \emph{corner} has every \(P_3\)-coordinate in \(\{0,2\}\); the remaining
coordinates are already binary. For arbitrary distinct vertices $x,y$, put
\[
 H_{r,s}(x,y)=B_{r,s}-\{x,y\},
\]
and define
\begin{align*}
 \mu(x,y)&=\bigl|\{i\in[r]:x_i=y_i=1\}\bigr|,\\
 \alpha(x,y)&=\bigl|\{i\in[r]:|x_i-y_i|=1\}\bigr|,\\
 p(x,y)&=\bigl|\{i\in[r]:\{x_i,y_i\}=\{0,2\}\}\bigr|,\\
 q(x,y)&=\bigl|\{j\in[s]:x_{r+j}\ne y_{r+j}\}\bigr|.
\end{align*}
Thus $x,y$ are both corners exactly when $\mu(x,y)=\alpha(x,y)=0$.
For corners, $p$ and $q$ record the numbers of differing $P_3$- and
$P_2$-factors, and the corners are opposite exactly when $p=r$ and $q=s$.
We call $\{x,y\}$ a \emph{boundary $P_3$-edge} if $xy$ is an edge in a
$P_3$-factor and one endpoint is a corner of $B_{r,s}$. Equivalently,
$\mu=0$, $\alpha=1$, $p=q=0$, and all other $P_3$-coordinates are common
endpoints.

Figure~\ref{fig:b11-p4} shows the two occurrences of the smallest obstruction
$P_4$ inside $B_{1,1}$.  It also fixes the geometric conventions used in the
later deletion diagrams.

\begin{figure}[t]
\centering
\begin{minipage}{.46\textwidth}
\centering
\begin{tikzpicture}[x=1.15cm,y=.85cm]
  \foreach \j in {0,1}{\draw[gray!55] (0,\j)--(2,\j);}
  \foreach \i in {0,1,2}{\draw[gray!55] (\i,0)--(\i,1);}
  \draw[very thick] (2,0)--(1,0)--(1,1)--(0,1);
  \foreach \p in {(1,0),(2,0),(0,1),(1,1)}
    \fill \p circle (2.2pt);
  \node[red!75!black,font=\Large] at (0,0) {$\times$};
  \node[red!75!black,font=\Large] at (2,1) {$\times$};
  \node[below] at (0,-.12) {$0$};
  \node[below] at (1,-.12) {$1$};
  \node[below] at (2,-.12) {$2$};
\end{tikzpicture}

\small (a) An opposite-corner deletion.
\end{minipage}\hfill
\begin{minipage}{.46\textwidth}
\centering
\begin{tikzpicture}[x=1.15cm,y=.85cm]
  \foreach \j in {0,1}{\draw[gray!55] (0,\j)--(2,\j);}
  \foreach \i in {0,1,2}{\draw[gray!55] (\i,0)--(\i,1);}
  \draw[very thick] (0,1)--(1,1)--(2,1)--(2,0);
  \foreach \p in {(2,0),(0,1),(1,1),(2,1)}
    \fill \p circle (2.2pt);
  \node[red!75!black,font=\Large] at (0,0) {$\times$};
  \node[red!75!black,font=\Large] at (1,0) {$\times$};
  \node[below] at (0,-.12) {$0$};
  \node[below] at (1,-.12) {$1$};
  \node[below] at (2,-.12) {$2$};
\end{tikzpicture}

\small (b) A boundary-$P_3$-edge deletion.
\end{minipage}
\caption{Two deletions from $B_{1,1}=P_3\square Q_1$ that yield $P_4$.
Crosses denote deleted vertices; thick edges form the surviving graph.}
\label{fig:b11-p4}
\end{figure}

We begin with the daisy property of the ambient products and their
one-corner deletions.

\begin{lemma}[Products and one-corner deletions]\label{lem:products-daisy}
For all \(r,s\ge0\), the graph \(B_{r,s}\) is a daisy cube. If one corner
is deleted and the remaining graph is nonempty, the resulting graph is
also a daisy cube.
\end{lemma}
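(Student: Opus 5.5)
I will prove both claims directly from the down-set definition (Definition~\ref{def:daisy}). I will use an explicit binary encoding of $B_{r,s}$, and only afterwards cross-check with Theorem~\ref{thm:peripheral-characterization}.

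First, encode each $P_3$-coordinate value $t\in\{0,1,2\}$ by two bits: $0\mapsto 00$, $1\mapsto 10$, $2\mapsto 11$. This embeds $P_3$ isometrically into $Q_2$ with image $\{00,10,11\}$, which is a down-set in $\cube^2$. Products of isometric embeddings are isometric embeddings of the Cartesian product. Hence $B_{r,s}$ embeds isometrically into $Q_{2r+s}$ with image
\[
 D=\{00,10,11\}^r\times\cube^s .
\]
A product of down-sets is a down-set, so $B_{r,s}$ is a daisy cube. Because $D$ is a down-set, the induced subgraph $Q_{2r+s}[D]$ coincides with $B_{r,s}$. Adjacency is preserved in both directions: $10$ is adjacent to $00$ and to $11$, while $00$ and $11$ are not adjacent, matching $P_3$.

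For a one-corner deletion, I use the automorphisms of $B_{r,s}$. Reflecting any $P_3$-coordinate ($t\mapsto 2-t$) or complementing any $Q_s$-coordinate is an automorphism, and these act transitively on corners. So it suffices to delete the corner $c=(2,\dots,2,1,\dots,1)$, whose label is the all-ones string $1^{2r+s}$. This label is the unique maximal element of $D$. Removing a maximal element from a down-set leaves a down-set $D\setminus\{1^{2r+s}\}$, which is nonempty when $|V(B_{r,s})|\ge 2$. The remaining step is to check that the induced subgraph $Q_{2r+s}[D\setminus\{c\}]$ equals $B_{r,s}-c$ as a graph. This holds because inducing on a subset commutes with deleting a vertex.

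There is a subtlety here. Definition~\ref{def:daisy} only requires that the graph be isomorphic to $Q_n[D']$ for some down-set $D'$. It does not require the embedding to be isometric, because induced subgraphs on down-sets are automatically isometric: any two elements of a down-set are joined through their meet by a geodesic inside the down-set. So nothing beyond the down-set property needs verifying, and the graph we obtain is $B_{r,s}-c$ exactly.

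I expect no real obstacle. The only point needing care is the transitivity claim, namely that every corner can be moved to the top corner by automorphisms, since the encoding is asymmetric. Reflections handle this one coordinate at a time.

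As an optional sanity check via Theorem~\ref{thm:peripheral-characterization}, one can verify that every $\Theta$-class of $B_{r,s}-c$ is peripheral. Deleting a single vertex does not disconnect any class's boundary, and the deleted vertex lies on the peripheral side. The down-set argument makes this check unnecessary.
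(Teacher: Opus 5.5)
Your argument fails at its first step: $\{00,10,11\}$ is not a down-set of $\{0,1\}^2$, because $01\le 11$ while $01\notin\{00,10,11\}$. Consequently $D=\{00,10,11\}^r\times\{0,1\}^s$ is not a down-set once $r\ge1$, and neither assertion of the lemma is proved.

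No relabelling that puts an endpoint of $P_3$ at the all-zero string can work. A three-element down-set must be $\{0^n,e_i,e_j\}$, and there $0^n$ is adjacent to both other elements. So the zero label is forced onto the \emph{center} of $P_3$. In the language of Theorem~\ref{thm:root-criterion}, the center is the unique root, because the peripheral halfspace of each of the two $\Theta$-classes of $P_3$ is a single leaf. Your ``unique maximal element $1^{2r+s}$'' is therefore an artifact of the invalid encoding: a genuine down-set image of $B_{r,s}$ with $r\ge1$ has $2^r$ maximal elements.

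The repair is to encode $0\mapsto10$, $1\mapsto00$, $2\mapsto01$. Then $D=\{00,10,01\}^r\times\{0,1\}^s$ is a product of down-sets, hence a down-set, and the rest of your first argument goes through. Both endpoint labels $10$ and $01$ are maximal. So complementing binary coordinates until a corner $c$ has binary part $1^s$ makes the label of $c$ a maximal (not the unique maximal) element of $D$; no $P_3$-reflections are needed. Removing that element leaves a down-set inducing $B_{r,s}-c$.

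With this correction, your deletion argument coincides with the paper's: it also places each $P_3$-factor as $\{00,10,01\}$, orients the binary coordinates so that $c$ has bit $1$, and deletes a maximal element. For the first claim, the paper instead cites a known Cartesian-product characterization of daisy cubes; your corrected product-of-down-sets observation is self-contained.

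Separately, the reason given in your optional sanity check is inaccurate. If $c$ has value $0$ in a $P_3$-coordinate, then $c$ lies on the non-peripheral side of the class between levels $1$ and $2$ of that coordinate. The conclusion still holds there, because no level-$2$ vertex has $c$ as its level-$1$ neighbor.
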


\begin{proof}
The first assertion follows from the Cartesian-product characterization of
daisy cubes \cite[Theorem~4.1 and Corollary~4.2]{BrezovnikCheTratnikZigert2025},
because both $P_3$ and $Q_s$ are daisy cubes.

Let \(c\) be a corner. In every \(P_3\)-factor, choose the above embedding
of $P_3$ as the down-set $\{00,10,01\}$ so that the endpoint occurring in
$c$ receives a maximal label.  Orient every $Q_s$-coordinate so that the bit
of $c$ is $1$.
Then \(c\) is a maximal element of the product down-set. Deleting a maximal
element preserves downward closure.
\end{proof}

We next record the product-geodesic fact used in the sufficiency proof.
Imrich, Zmazek, and \v{Z}erovnik
\cite[p.~276]{ImrichZmazekZerovnik2003} proved the stronger statement that
vertices differing in $k$ Cartesian coordinates admit at least $k$
internally vertex-disjoint geodesics.  We need only the following special
case.

\begin{lemma}[Cyclic block geodesics]\label{lem:cyclic-geodesics}
Let
\[
 I=P_{\ell_1+1}\square\cdots\square P_{\ell_k+1},
 \qquad k\geq3,\quad \ell_i\geq1,
\]
and let $u=(0,\ldots,0)$ and $v=(\ell_1,\ldots,\ell_k)$ be opposite
corners.  Then $I$ contains three $u,v$-geodesics whose internal vertex
sets are pairwise disjoint.
\end{lemma}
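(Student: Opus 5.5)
We construct the three geodesics explicitly as ``cyclic block'' monotone paths. A $u,v$-geodesic in $I$ is exactly a lattice path that increases one coordinate by $1$ at each step. So it suffices to choose three orders in which the coordinates are completed. For a cyclic shift $\sigma$ of $(1,\ldots,k)$, let $\gamma_\sigma$ be the path that first raises coordinate $\sigma(1)$ from $0$ to $\ell_{\sigma(1)}$, then raises coordinate $\sigma(2)$ fully, and so on. We take
\[
 \sigma_1=(1,2,\ldots,k),\qquad \sigma_2=(2,3,\ldots,k,1),\qquad \sigma_3=(3,4,\ldots,k,1,2).
\]
Each $\gamma_{\sigma_j}$ has length $\sum_i\ell_i=d_I(u,v)$, so each is a geodesic.

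To check that the internal vertex sets are pairwise disjoint, we describe each internal vertex by its support pattern. On $\gamma_{\sigma_j}$, every internal vertex $w$ has the form in which some initial segment $\sigma_j(1),\ldots,\sigma_j(t-1)$ of coordinates is full. The coordinate $\sigma_j(t)$ is partially raised, with $1\le w_{\sigma_j(t)}\le\ell_{\sigma_j(t)}$. All later coordinates are $0$. We may normalize by choosing $t$ minimal, so that coordinate $\sigma_j(t)$ is nonzero and all later ones are zero. Then the set $Z(w)$ of zero coordinates is a cyclic final segment of $\sigma_j$. The set $F(w)$ of full coordinates contains the cyclic initial segment before $\sigma_j(t)$. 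Here $Z(w)\neq\emptyset$ unless $w=v$, and some coordinate is nonzero since $w\neq u$.

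The key step is to show that $w$ determines $j$. Consider the cyclic interval of coordinates $\{i:w_i\neq0\}$; it is a nonempty proper cyclic interval starting at $\sigma_j(1)$, so its starting point must equal $j$. The difficulty is that if the whole interval is both full and nonzero, the ``start'' is ambiguous on a cycle. This is resolved by noting that the nonzero set on $\gamma_{\sigma_j}$ is a proper interval $\{j,j+1,\ldots,j+t-1\}$ read cyclically, with $t\le k-1$ for internal vertices. A proper nonempty cyclic interval in $\mathbb Z_k$ has a unique start. Hence $j$ is recovered, and distinct $j\in\{1,2,3\}$ give disjoint interiors.

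The only delicate point is the boundary behaviour at $t=k$. Once the last coordinate $\sigma_j(k)$ becomes nonzero, the nonzero set is all of $[k]$ and the start is no longer unique. So we must separately handle internal vertices with all coordinates nonzero. On $\gamma_{\sigma_j}$ these are the vertices where all coordinates except $\sigma_j(k)=j-1$ are full and $1\le w_{j-1}<\ell_{j-1}$. Such a vertex has exactly one non-full coordinate, $j-1$, which again determines $j$ uniquely.

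Similarly, vertices with $t\le k-1$ have at least one zero coordinate, so they cannot coincide with these all-nonzero vertices. Combining the two cases shows that $j$ is recovered from $w$ in all cases, so the three interiors are disjoint. The hypothesis $k\ge3$ is used exactly to make the three cyclic starts $1,2,3$ distinct modulo $k$.
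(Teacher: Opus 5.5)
Your proof is correct. The three cyclic block orders do give $u,v$-geodesics, and your two-case recovery of $j$ is sound. For $t\le k-1$ the nonzero set is a proper cyclic interval whose unique start is $j$. For $t=k$ the unique non-full coordinate is $j-1$. The two cases cannot overlap, since one has a zero coordinate and the other does not.

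The paper does not prove this lemma. It presents it as a special case of the result of Imrich, Zmazek, and \v{Z}erovnik that vertices differing in $k$ Cartesian coordinates are joined by at least $k$ internally vertex-disjoint geodesics. The lemma's name points to exactly the block-order construction you use, so your argument is a self-contained proof of what the paper delegates to a citation.

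Your argument also gives more than the lemma asks. Run it with all $k$ cyclic shifts $\sigma_1,\ldots,\sigma_k$. The interval starts $1,\ldots,k$ are distinct, and so are the unique non-full coordinates $k,1,\ldots,k-1$. This yields $k$ pairwise internally disjoint geodesics, which recovers the cited statement for products of paths (the only case the paper needs). It also covers the $k=2$ block-order geodesics that the paper uses separately in the proof of Theorem~\ref{thm:ambient-isometry}.

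One expository fix: two of your early assertions are false as stated when $\ell_{\sigma_j(k)}\geq2$. These are that $Z(w)\neq\emptyset$ unless $w=v$, and that $t\le k-1$ for every internal vertex. You handle exactly these vertices correctly in your later paragraph. Drop the earlier claims and split into the cases $t\le k-1$ and $t=k$ from the outset.
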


We now characterize ambient isometry for arbitrary deleted vertices.  The
second alternative below is the only noncorner case.

\begin{figure}[t]
\centering
\begin{minipage}{.43\textwidth}
\centering
\begin{tikzpicture}[x=1.25cm,y=1cm]
  \draw[very thick] (0,0)--(1,0)--(2,0);
  \fill (0,0) circle (2.2pt) node[below=4pt] {$z^-$};
  \fill (2,0) circle (2.2pt) node[below=4pt] {$z^+$};
  \node[red!75!black,font=\Large] at (1,0) {$\times$};
  \node[above=5pt] at (1,0) {$z$};
\end{tikzpicture}

\small (a) A deleted middle vertex blocks the unique 2-geodesic.
\end{minipage}\hfill
\begin{minipage}{.49\textwidth}
\centering
\begin{tikzpicture}[x=1.05cm,y=1.05cm]
  \draw[very thick] (0,0) rectangle (1.45,1.05);
  \fill (1.45,0) circle (2.2pt) node[below=4pt] {$u$};
  \fill (0,1.05) circle (2.2pt) node[above=4pt] {$v$};
  \node[red!75!black,font=\Large] at (0,0) {$\times$};
  \node[red!75!black,font=\Large] at (1.45,1.05) {$\times$};
  \node[below=5pt] at (0,0) {$x$};
  \node[above=5pt] at (1.45,1.05) {$y$};
\end{tikzpicture}

\small (b) Two opposite deleted corners block both 2-geodesics.
\end{minipage}
\caption{The two local obstructions to ambient isometry used in
Theorem~\ref{thm:ambient-isometry}.}
\label{fig:ambient-obstructions}
\end{figure}

\begin{theorem}[Ambient-isometry classification]
\label{thm:ambient-isometry}
Assume that $H=H_{r,s}(x,y)$ is nonempty. Then $H$ is an isometric
subgraph of $B_{r,s}$ if and only if one of the following holds:
\begin{enumerate}
\item $x,y$ are corners and $(p(x,y),q(x,y))\ne(0,2)$;
\item $\{x,y\}$ is a boundary $P_3$-edge.
\end{enumerate}
\end{theorem}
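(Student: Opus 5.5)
The plan is to prove necessity with the two local obstructions of Figure~\ref{fig:ambient-obstructions}, and sufficiency by finding, for every pair $u,v\in V(H)$, a $u,v$-geodesic of $B_{r,s}$ that avoids $x$ and $y$. Throughout I use that $d_{B_{r,s}}$ is the sum of the coordinatewise path distances. I also use that the interval $I(u,v)$ in $B_{r,s}$ is isomorphic to $P_{\ell_1+1}\square\cdots\square P_{\ell_k+1}$. Here $k$ is the number of coordinates in which $u$ and $v$ differ, and each $\ell_i\in\{1,2\}$.

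\emph{Necessity.} Suppose $x$ is not a corner, say $x_i=1$. Let $z^\pm$ be the neighbours of $x$ obtained by setting coordinate $i$ to $0$ and to $2$. Then $d(z^-,z^+)=2$ and $x$ is the unique common neighbour of $z^-$ and $z^+$ in $B_{r,s}$. So if both $z^\pm$ survive, $H$ is not isometric. Hence $y\in\{z^-,z^+\}$, which means $y$ differs from $x$ only in coordinate $i$ and $y_i\in\{0,2\}$. If $x$ had a second middle coordinate $j\neq i$, the same argument in coordinate $j$ would force $y$ to differ from $x$ in coordinate $j$, which is a contradiction. Hence $x$ has exactly one middle coordinate and $y$ is a corner, so $\{x,y\}$ is a boundary $P_3$-edge. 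The same argument, applied to $y$, shows that $x$ and $y$ cannot both be noncorners. Now suppose both are corners with $(p,q)=(0,2)$. Then $x$ and $y$ differ in exactly two binary coordinates. The other two vertices $u,v$ of the resulting $4$-cycle survive, satisfy $d(u,v)=2$, and have $x,y$ as their only common neighbours, so $H$ is not isometric.

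\emph{Sufficiency.} Fix $u,v\in V(H)$ and split according to $k$.
\begin{itemize}
\item If $k\ge3$, Lemma~\ref{lem:cyclic-geodesics} gives three internally disjoint geodesics in $I(u,v)$. The two deleted vertices meet at most two of them, so one geodesic survives.
\item If $k=1$, the geodesic is unique, and it has an internal vertex only when the differing coordinate is a $P_3$-coordinate with $\{u_i,v_i\}=\{0,2\}$. That internal vertex is a noncorner. In case~1 no deleted vertex is a noncorner. In case~2 the noncorner deleted vertex would be $x$, and then the corner $y$, which agrees with $x$ off coordinate $i$, equals $u$ or $v$. This is impossible since $u,v\in V(H)$.
\item If $k=2$, the interval is $P_2\square P_2$, $P_2\square P_3$, or $P_3\square P_3$. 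The task is to list the sets of at most two internal vertices meeting every $u,v$-geodesic and to check that none is allowed by case~1 or case~2.
\end{itemize}

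In the $k=2$ case I expect the finite check to go as follows. For $P_2\square P_2$ the only blocking set is the opposite internal pair. Both coordinates are then binary, since a $P_3$-coordinate would make one internal vertex a middle vertex. So the blocking set is exactly two corners with $(p,q)=(0,2)$, which is excluded. For $P_2\square P_3$ the blocking pairs are the two internal middle vertices, which are both noncorners and are therefore excluded in case~1 and are not a boundary edge, and a pair that differs in both coordinates, which is neither a $P_3$-edge nor a pair of corners. For $P_3\square P_3$ every blocking pair contains a middle vertex adjacent to $u$ or $v$. One then checks that such a pair is never a pair of corners and never a boundary $P_3$-edge, using that a boundary edge changes only one coordinate and contains exactly one noncorner.

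The main obstacle is this $k=2$ enumeration, in particular making the $P_3\square P_3$ cut analysis rigorous rather than pictorial. I would handle it by listing the minimal vertex separators of $u$ and $v$ inside the small grid explicitly; there are only a handful.
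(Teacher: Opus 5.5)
Your proposal is correct and has the same structure as the paper's proof: necessity comes from the same two local obstructions (your second-middle-coordinate argument at $x$ is a harmless variant of the paper's check at $z^-$), Lemma~\ref{lem:cyclic-geodesics} handles $k\ge3$, and $k\le2$ is a small local analysis. The one difference is at $k=2$, where the paper exhibits a surviving geodesic directly (the two block-order geodesics, plus the one through $(1,0)$ when $\ell_1=2$) and you enumerate blocking pairs instead. When you carry out that enumeration, note that the blocking pairs are exactly $N_I(u)$ and $N_I(v)$, together with the middle level in the case $P_2\square P_3$. So $P_2\square P_3$ has two diagonal blocking pairs rather than one, and in $P_3\square P_3$ a boundary $P_3$-edge is ruled out because each blocking pair is a nonadjacent pair of noncorners, not merely because it contains a middle vertex.
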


\begin{proof}
Suppose first that one of $x,y$, say $z$, has value $1$ in a
$P_3$-coordinate $i$.  Let $z^-$ and $z^+$ be its two neighbors obtained
by changing that coordinate to $0$ and $2$.  If the other deleted vertex is
neither $z^-$ nor $z^+$, then both remain in $H$.  Their unique
length-two geodesic in $B_{r,s}$ is $z^-zz^+$, so $H$ is not isometric.
This is the obstruction in Figure~\ref{fig:ambient-obstructions}(a).
If the other deleted vertex is, say, $z^-$, then the deleted pair is a
$P_3$-edge.  Unless $z^-$ is a corner, it has value $1$ in another
$P_3$-coordinate $k$.  Its two neighbors in coordinate $k$ remain, and
their unique length-two geodesic uses $z^-$.  Again $H$ is not isometric.
Consequently, every ambient-isometric deletion is either a pair of corners
or a boundary $P_3$-edge.

For two corners, suppose first that $(p,q)=(0,2)$.  They are opposite
vertices of the square obtained by varying the two differing binary
coordinates.  The other two vertices of this square remain at ambient
distance two, but both of their length-two paths use a deleted corner.
Thus the deletion is not isometric; see
Figure~\ref{fig:ambient-obstructions}(b).

It remains to prove sufficiency.  Take surviving vertices $u,v$ and let
$I=I_{B_{r,s}}(u,v)$.  This interval is a Cartesian product of the
nontrivial coordinate intervals between $u$ and $v$.  Let $k$ be the
number of its nontrivial factors.  If $k\geq3$, Lemma~\ref{lem:cyclic-geodesics}
provides three internally vertex-disjoint $u,v$-geodesics in $I$.  Since
$u,v$ survive, two deleted vertices cannot meet all three paths.

Assume next that $k\leq2$ and that the deleted vertices are corners.  Every
deleted vertex that belongs to $I$ is a corner of $I$.  If $k\leq1$, the
only corners of $I$ are its endpoints $u,v$, so no deleted vertex is
internal to its unique geodesic.  Suppose that $k=2$, and write
\[
 I=P_{\ell_1+1}\square P_{\ell_2+1},\qquad
 u=(0,0),\quad v=(\ell_1,\ell_2).
\]
The two block-order geodesics pass through the other corners
$c_1=(\ell_1,0)$ and $c_2=(0,\ell_2)$, respectively.  If at most one of
$c_1,c_2$ is deleted, the other geodesic survives.  If both are deleted,
then every varying $P_3$-coordinate has its full two-edge interval, because
$c_1,c_2$ are corners of $B_{r,s}$.  Consequently, if
$\ell_1=\ell_2=1$, both factors are binary and the deleted pair has
$(p,q)=(0,2)$, which is excluded.  Otherwise, after interchanging the
factors, assume $\ell_1=2$.  The geodesic that first moves from $(0,0)$ to
$(1,0)$, then traverses the second factor, and finally moves to
$(2,\ell_2)$ avoids both $c_1$ and $c_2$.

Finally, suppose that the deleted pair is a boundary $P_3$-edge
$\{c,c'\}$, where $c$ is its corner endpoint.  If $k\leq1$, neither
deleted vertex can be internal to $I$: the corner $c$, if present, is an
endpoint, while an interval in the distinguished $P_3$-coordinate that
contains $c'$ internally also has $c$ as an endpoint; in every other
coordinate, $c'$ itself has an endpoint value.  If $k=2$, the two
block-order boundary geodesics are internally disjoint.  Hence one avoids
the deleted pair unless both $c,c'$ lie in $I$.  In that event, $c$ is one
of the two corners of $I$ different from $u,v$, and the edge $cc'$ lies on
the boundary geodesic through $c$; the other boundary geodesic avoids both
deleted vertices.  Thus in every case $I-\{x,y\}$ contains a
$u,v$-geodesic, and $H$ is isometric.
\end{proof}

\begin{corollary}[Corner isometry]
\label{cor:corner-isometry}
For distinct corners $a,b$, the nonempty graph $H_{r,s}(a,b)$ is
isometric in $B_{r,s}$ if and only if $(p(a,b),q(a,b))\ne(0,2)$.
\end{corollary}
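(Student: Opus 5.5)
This is the specialization of Theorem~\ref{thm:ambient-isometry} to the case where both deleted vertices are corners, so I would derive it directly from that theorem. Let $a,b$ be distinct corners. Then every $P_3$-coordinate of each of them lies in $\{0,2\}$. Hence $\mu(a,b)=\alpha(a,b)=0$, and the pair is a corner pair in the sense of alternative~(1).

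For the \emph{if} direction, assume $(p(a,b),q(a,b))\ne(0,2)$. Alternative~(1) of Theorem~\ref{thm:ambient-isometry} holds, so $H_{r,s}(a,b)$ is isometric in $B_{r,s}$.

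For the \emph{only if} direction, assume $H_{r,s}(a,b)$ is isometric. By the theorem, either alternative~(1) or alternative~(2) holds. Alternative~(1) gives the required inequality at once, so the only step needing care is to exclude alternative~(2). A boundary $P_3$-edge has $\alpha=1$, whereas $\alpha(a,b)=0$ for two corners. Equivalently, adjacent vertices along a $P_3$-factor differ there by $1$, which forces one of them to take the value $1$ in that coordinate, so it is not a corner. Thus alternative~(2) cannot occur, and alternative~(1) yields $(p,q)\ne(0,2)$.

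A direct route is also available and would serve as a cross-check. For $(p,q)=(0,2)$, the two remaining vertices of the square spanned by the two differing binary coordinates lie at ambient distance two, and both of their common neighbours are deleted, so the deletion is not isometric. For sufficiency, one uses the interval argument from the proof of Theorem~\ref{thm:ambient-isometry}: three internally disjoint geodesics when $k\ge3$, and the detour through the middle value of a full $P_3$-factor when $k=2$. No step is a real obstacle; the only subtlety is the compatibility check $\alpha=0$ versus $\alpha=1$ above.
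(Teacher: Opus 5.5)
Your proposal is correct and matches the paper, which states this corollary without a separate proof as the specialization of Theorem~\ref{thm:ambient-isometry} to corner pairs. Your explicit exclusion of the boundary-$P_3$-edge alternative (one endpoint has value $1$, so $\alpha=1\ne0$) is exactly the implicit step needed.
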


\begin{lemma}[Boundary-edge profile]\label{lem:boundary-profile}
Let $\{x,y\}$ be a boundary $P_3$-edge.  If $(r,s)=(1,0)$, then
$H_{r,s}(x,y)\cong K_1$.  Otherwise, $H_{r,s}(x,y)$ is a non-daisy
partial cube with exactly one non-peripheral $\Theta$-class.
\end{lemma}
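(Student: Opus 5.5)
Our plan is to use Theorem~\ref{thm:ambient-isometry} to get the partial-cube structure, Lemma~\ref{lem:isometric-theta} to read off the $\Theta$-classes and halfspaces from $B_{r,s}$, and then test peripherality class by class. Up to automorphism we fix the deleted pair as
\[
c=(0,2,\dots,2;\,1,\dots,1),\qquad c'=(1,2,\dots,2;\,1,\dots,1),
\]
so $c$ is a corner, $c'$ is its neighbour in the first $P_3$-coordinate, and all other coordinates agree. If $(r,s)=(1,0)$, then $B_{1,0}=P_3$ and removing two adjacent vertices leaves $K_1$.

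Now let $(r,s)\ne(1,0)$. By Theorem~\ref{thm:ambient-isometry}, $H=H_{r,s}(c,c')$ is isometric in $B_{r,s}$, hence a partial cube. Lemma~\ref{lem:isometric-theta} says its $\Theta$-classes are the nonempty traces of the classes of $B_{r,s}$. Each $P_3$-factor contributes two classes, namely the cut between values $0\mid\{1,2\}$ and the cut between $\{0,1\}\mid 2$. Each $Q_s$-factor contributes one class.

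We first check that every such trace is nonempty when $(r,s)\ne(1,0)$. Some other coordinate exists, and changing it at $c$ or $c'$ gives surviving edges in each cut. In particular, for the cut $0\mid\{1,2\}$ in coordinate $1$, the edge between $c$ and $c'$ is removed, but parallel edges with a different second coordinate survive.

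Next we verify peripherality of every class except the first-coordinate $0\mid\{1,2\}$ cut. In $B_{r,s}$, with the orientation of Lemma~\ref{lem:products-daisy} that makes $c$ maximal, every halfspace on the $1$-side consists entirely of $U$-vertices. We need the corresponding side in $H$ still to have this property.

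\begin{itemize}
\item A vertex $w\ne c,c'$ loses its lowering neighbour only if that neighbour is $c$ or $c'$.
\item Lowering moves away from $c$'s values. So a vertex whose lowering neighbour is $c$ would be above $c$, which is impossible since $c$ is maximal.
\item For $c'$ the bad case is a vertex $w$ that differs from $c'$ in a single coordinate. This is resolved by choosing the peripheral side appropriately. The $\{0,1\}\mid 2$ cut of coordinate $1$ has the side $\{2\}$, whose vertices all keep their neighbour with value $1$, except the one whose value-$1$ neighbour is $c'$. That vertex, $(2,2,\dots;1,\dots)$, remains attached via the side $\{0,1\}$ instead.
\end{itemize}

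The main obstacle is the bookkeeping of exactly which side is peripheral for each class after deletion. Our plan is to argue directly that for each remaining class at least one side is full. For a cut in another coordinate, the side not containing $c$ keeps all its neighbours because $c$ and $c'$ lie on the same side.

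Finally, we show that the first-coordinate $0\mid\{1,2\}$ class $\theta$ is not peripheral, using Corollary~\ref{cor:certificate}. We exhibit one vertex on each side of $\theta$ with no $\theta$-edge.

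\begin{itemize}
\item On the $0$-side, the vertex $c$ is deleted. Its neighbours $(0,\dots)$ differing from $c$ in exactly one other coordinate still have $\theta$-neighbours, so this side needs a different witness. The candidate we would try is $(0,2,\dots,2;1,\dots,1)$; that is $c$ itself, so it is unavailable.
\item On the $\{1,2\}$-side, the witness is $(2,2,\dots,2;1,\dots,1)$. Its only $\theta$-neighbour would be via value $1$, and that vertex is $c'$, which is deleted. We may want a different vertex here; this needs checking.
\end{itemize}

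So the honest plan is to find the non-peripheral class by a direct search near the deleted pair, most likely among the classes of the first coordinate, and to confirm the witnesses using the $P_4$ picture of Figure~\ref{fig:b11-p4}(b) lifted along the remaining factors. Our expectation is that the two-vertex obstruction in the $B_{1,1}$ slice persists in every slice containing the deleted edge. The hard step is making this witness pair exactly right.
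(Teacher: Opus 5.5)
Your setup is sound: ambient isometry, inherited $\Theta$-classes, and the down-set orientation of Lemma~\ref{lem:products-daisy} with $c$ maximal. The gap is in the decisive step, where you have the two first-coordinate classes the wrong way round.

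The cut $\theta=0\mid\{1,2\}$ is still peripheral in $H$ on its level-$0$ side. A surviving vertex $w$ with $w_1=0$ differs from $c$, so its $\theta$-partner (first coordinate raised to $1$) differs from $c'$ and survives. The only lost $\theta$-edge is $cc'$, and both its endpoints are deleted. That is why your search for a $0$-side witness could not succeed. Your proposed witness $(2,2,\dots,2;1,\dots,1)$ says nothing about $\theta$, because value-$2$ vertices are never incident with $\theta$, already in $B_{r,s}$.

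Conversely, your third bullet wrongly exempts $\theta'=\{0,1\}\mid 2$. The vertex $w^*=(2,2,\dots,2;1,\dots,1)$ lies on the $2$-side, and its only $\theta'$-edge went to $c'$, so that side is no longer full. Saying that $w^*$ ``remains attached via the side $\{0,1\}$'' rescues nothing. Peripherality of the $\{0,1\}$-side concerns the vertices of that side, and it contains surviving level-$0$ vertices that are never incident with $\theta'$ (e.g.\ $c$ with one further coordinate changed, available because $r+s\ge 2$). So $\theta'$, not $\theta$, is the non-peripheral class. This is exactly the paper's argument: in its normalization $x=(0,\dots,0;0,\dots,0)$, $y=(1,0,\dots,0;0,\dots,0)$, it is the class between levels $1$ and $2$, witnessed by $(2,0,\dots,0;0,\dots,0)$ on one side and a level-$0$ vertex on the other.

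Your own lowering-neighbour framework closes the proof once this is corrected. In the orientation with $c$ maximal, a surviving vertex loses its lowering neighbour only if it is an upper cover of $c'$, since no vertex lies above the maximal $c$. Outside the first coordinate, $c'$ agrees with the maximal corner $c$; in the first $P_3$-factor it carries the root label $00$. Hence its only upper covers in the down-set are $c$ (deleted) and $w^*$, and $w^*$ covers $c'$ only in the $\theta'$-bit. Every class other than $\theta'$ therefore keeps its full $1$-side. The class $\theta'$ loses its $1$-side, and its $0$-side was never peripheral. This gives exactly one non-peripheral class, and Theorem~\ref{thm:peripheral-characterization} then gives non-daisyness.

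As written, however, your proposal proves neither that some class is non-peripheral nor the ``exactly one'' count, because its bookkeeping declares the wrong class peripheral.
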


\begin{proof}
By Theorem~\ref{thm:ambient-isometry}, the deletion is isometric.  Reflect
and permute factors so that
\[
 x=(0,0,\ldots,0;0,\ldots,0),\qquad
 y=(1,0,\ldots,0;0,\ldots,0),
\]
where the first coordinate is a $P_3$-coordinate.  If $B_{r,s}=P_3$,
only the vertex $2$ remains.

Assume $r+s\geq2$.  Consider the $\Theta$-class between levels $1$ and
$2$ in the first coordinate.  The vertex $(2,0,\ldots,0;0,\ldots,0)$
has lost its only incident edge of this class.  On the other side, a
surviving vertex at first-coordinate level $0$ is obtained by changing an
additional factor, and such a vertex is not incident with this class.
Hence neither halfspace is peripheral.

The class between levels $0$ and $1$ remains peripheral on its level-$0$
side: the only missing crossing edge has both endpoints deleted.  Every
class in another $P_3$-factor retains its peripheral extreme side, since
the two deleted vertices have a common endpoint in that factor.  Every
binary-coordinate class is peripheral on the side containing the common
bit of the deleted vertices.  Lemma~\ref{lem:isometric-theta} shows that
these are all the classes of the deletion.  The assertion now follows from
Theorem~\ref{thm:peripheral-characterization}.
\end{proof}

\begin{remark}[A first non-isometric noncorner example]
\label{rem:nonisometric-scope}
Failure of ambient isometry does not rule out an isometric embedding into a
different hypercube.  This phenomenon already occurs for a noncorner pair:
deleting the corner $(0,0)$ and the center $(1,1)$ from
$B_{2,0}=P_3\square P_3$ leaves the seven-vertex boundary path $P_7$;
see Figure~\ref{fig:p3-grid-p7}.
Thus Theorem~\ref{thm:ambient-isometry} is an exact classification of
ambient isometry, but abstract partial-cube embeddings require an additional
analysis.  We first settle the corner branch and then return to all noncorner
pairs in Subsection~\ref{subsec:noncorner}.
\end{remark}

\begin{figure}[t]
\centering
\begin{tikzpicture}[x=1.05cm,y=1.05cm]
  \foreach \i in {0,1,2}{
    \draw[gray!45] (\i,0)--(\i,2);
    \draw[gray!45] (0,\i)--(2,\i);
  }
  \draw[very thick]
    (0,1)--(0,2)--(1,2)--(2,2)--(2,1)--(2,0)--(1,0);
  \foreach \p in {(0,1),(0,2),(1,2),(2,2),(2,1),(2,0),(1,0)}
    \fill \p circle (2.2pt);
  \node[red!75!black,font=\Large] at (0,0) {$\times$};
  \node[red!75!black,font=\Large] at (1,1) {$\times$};
  \node[below] at (0,-.12) {$0$};
  \node[below] at (1,-.12) {$1$};
  \node[below] at (2,-.12) {$2$};
  \node[left] at (-.12,0) {$0$};
  \node[left] at (-.12,1) {$1$};
  \node[left] at (-.12,2) {$2$};
\end{tikzpicture}
\caption{Deleting $(0,0)$ and $(1,1)$ from $P_3\square P_3$ destroys
ambient isometry but leaves the thick seven-vertex path.}
\label{fig:p3-grid-p7}
\end{figure}

For corner deletions, Corollary~\ref{cor:corner-isometry} leaves only the
parameter pair $(p,q)=(0,2)$, which we now analyze.

\begin{lemma}[The exceptional pair]\label{lem:exceptional-pair}
Suppose that \((p(a,b),q(a,b))=(0,2)\). Then
\(H_{r,s}(a,b)\) is a partial cube if and only if \(r+s=3\). In that case
it is not a daisy cube.
\end{lemma}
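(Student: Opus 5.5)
Write the deleted corners as $a=(c;\beta,00)$ and $b=(c;\beta,11)$. Here $c\in\{0,2\}^r$ is their common $P_3$-part, $\beta\in\{0,1\}^{s-2}$ is their common remaining binary part, and the last two binary coordinates are the two in which they differ. Thus $B_{r,s}=R\square Q_2$ with $R=P_3^{\,r}\square Q_{s-2}$, and $H$ is obtained by deleting a diagonal pair from the single $Q_2$-fiber over the vertex $\hat c=(c;\beta)$ of $R$. Since $\hat c$ is a corner of $R$, its neighbors in $R$ correspond bijectively to the $r+s-2$ factors of $R$. The proof splits according to $\deg_R(\hat c)=r+s-2$.

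\textbf{Small cases.} If $r+s=2$, then $R=K_1$ and $H$ consists of the two isolated vertices $u=(\hat c;10)$ and $v=(\hat c;01)$. This graph is disconnected, so it is not a partial cube.

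If $r+s=3$, then $R\in\{K_2,P_3\}$ and $\hat c$ has exactly one neighbor $\hat c_1$ in $R$.
\begin{itemize}
\item For $R=K_2$, $H$ is $Q_3$ minus a diagonal of one face. This is a $4$-cycle on the $\hat c_1$-fiber with pendant vertices $u,v$ attached to two adjacent cycle vertices.
\item For $R=P_3$, $H$ is the same graph with an extra $Q_2$-fiber glued along the far end.
\end{itemize}
In both cases I will verify directly that $\Theta$ is transitive, or equivalently exhibit an isometric labeling. The surviving part is $(R\square Q_2)$ minus a fiber, which is a product and hence a partial cube, together with two pendant vertices $u,v$ whose pendant edges get new coordinates. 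For non-daisyness I apply Theorem~\ref{thm:peripheral-characterization}. Take the $\Theta$-class of the $R$-edge direction $\hat c\hat c_1$, which consists of the two pendant edges. On the $\hat c$-side of this class, $u$ and $v$ are both incident with it, and that side contains nothing else. The other side contains the vertices $(\hat c_1;00)$ and $(\hat c_1;11)$, which are not incident with the class. So if $R=K_2$, this class is peripheral only on the $\hat c$-side.

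Since that is not enough, I will instead use the class of one binary coordinate, say the first. On the $\hat c_1$-side every vertex is incident with this class. However, $u=(\hat c;10)$ has lost its partner $(\hat c;00)=a$, and $v=(\hat c;01)$ has lost its partner $(\hat c;11)=b$. These two vertices lie on opposite sides of the class. Hence $u$ witnesses failure on one side and $v$ witnesses failure on the other. This is exactly a certificate as in Corollary~\ref{cor:certificate}, valid for both choices of $R$ provided the pendant-edge classes are accounted for correctly.

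\textbf{Main case $r+s\ge4$.} Now $\hat c$ has two neighbors $\hat c_1,\hat c_2$ in distinct factors of $R$; let $\hat c_{12}$ denote the fourth vertex of their square. I will show $H$ is not a partial cube by proving that $\Theta_H$ is not transitive.
\begin{itemize}
\item First, $d_H(u,v)=4$. There are exactly two $u,v$-geodesics passing through the fibers of $\hat c_1$ and $\hat c_2$, and further ones through other neighbors.
\item All other distances agree with the ambient ones, as in the sufficiency part of Theorem~\ref{thm:ambient-isometry}: the only blocked length-two geodesics are $u\,a\,v$ and $u\,b\,v$.
\end{itemize}
From the $8$-cycle $C$ formed by the two geodesics through $\hat c_1$ and $\hat c_2$, I will read off two relations. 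First, the edge $e_1=u(\hat c_1;10)$ is $\Theta_H$-related to the opposite edge $e_2=(\hat c_2;01)v$. Second, using ambient distances away from $u,v$, the edge $e_2$ is $\Theta_H$-related to the parallel edges in the $\hat c_2$-direction, such as $(\hat c_1;01)(\hat c_{12};01)$, inside the convex $\hat c_1$-fiber region. Chaining these gives a relation between $e_1$ and a $\hat c_2$-direction edge that the defining distance formula shows is false. Equivalently, $e_1$ would be $\Theta$ to edges in two different $R$-directions while those edges are not $\Theta$ to each other.

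I expect this distance bookkeeping to be the main obstacle: choosing a concrete violating triple of edges and checking the four distance sums in each relation. To keep it manageable, I will first restrict to the subgraph $H'=(\{\hat c,\hat c_1,\hat c_2,\hat c_{12}\}\times Q_2)-\{a,b\}$, which is $Q_4$ minus two vertices at distance $2$. I will check that distances among its vertices are the same in $H$ as in $H'$, since geodesics leaving this block can be projected back. The transitivity failure then reduces to a computation in a $14$-vertex graph.
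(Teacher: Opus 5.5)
Your proposal is correct and follows essentially the paper's proof. It uses the same splitting $B_{r,s}=R\square Q_2$ with the case distinction $\deg_R\hat c=0,1,\geq 2$. For $r+s=3$ it uses an isometric labeling (your product-plus-pendant-coordinates labeling is the paper's $\varphi$) together with the binary class having the non-incident witnesses $u,v$ on opposite sides. For $r+s\geq4$ it uses a non-transitive triple $e_1\,\Theta\,e_2$, $e_2\,\Theta\,(\hat c_1;01)(\hat c_{12};01)$, $e_1\mathrel{\not\Theta}(\hat c_1;01)(\hat c_{12};01)$, a variant of the paper's $e,f,g$ that the distance formula confirms.

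Two side remarks should be corrected, though neither affects the argument. For $R=K_2$ the leaves $u,v$ hang at the opposite vertices $(\hat c_1;10)$ and $(\hat c_1;01)$ of the $4$-cycle, not at adjacent ones. The two pendant edges are bridges, so each is its own $\Theta$-class of $H$ rather than one common class; $H$ is not ambient-isometric, so ambient classes cannot simply be inherited.
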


\begin{proof}
After reflecting coordinates, write
\[
 B_{r,s}=C\square Q_2,
 \qquad
 a=(c,00),\quad b=(c,11),
 \qquad
 C=B_{r,s-2},
\]
where \(c\) is a corner of \(C\). Put \(d=r+s-2\), the number of Cartesian
factors of \(C\).

If \(d=0\), the two surviving vertices \((c,01)\) and \((c,10)\) are
isolated from each other. Thus the graph is disconnected and is not a
partial cube under our convention.

Suppose that \(d=1\). Then \(C=P_{\ell+1}\), where
\(\ell\in\{1,2\}\), and we may label its vertices
\(0,1,\ldots,\ell\) so that \(c=0\). Set
\(u=(0,01)\), \(v=(0,10)\), and, for \(1\le k\le\ell-1\), put
\(\chi_k(t)=1\) if \(t\ge k+1\) and \(\chi_k(t)=0\) otherwise. The map
\(\varphi:V(H_{r,s}(a,b))\to\cube^{\ell+3}\) defined by
\begin{align*}
 \varphi(u)&=(0,0,0,0,0^{\ell-1}),\\
 \varphi(v)&=(1,1,1,1,0^{\ell-1}),\\
 \varphi(t,x,y)&=(1,0,x,1-y,
                  \chi_1(t),\ldots,\chi_{\ell-1}(t))
                  \quad (1\le t\le\ell)
\end{align*}
is isometric. Indeed, among vertices with positive first coordinate its
Hamming distance is \(|t-t'|+|x-x'|+|y-y'|\), while
\begin{align*}
 d_H(u,(t,x,y))&=t+x+(1-y),\\
 d_H(v,(t,x,y))&=t+(1-x)+y,\\
 d_H(u,v)&=4,
\end{align*}
and these are exactly the corresponding Hamming distances. Hence \(H\) is
a partial cube. The coordinate class that changes \(x\) has the
nonincident vertex \(u\) on its \(0\)-side and the nonincident vertex
\(v\) on its \(1\)-side. This class is not peripheral, so
Theorem~\ref{thm:peripheral-characterization} shows that \(H\) is not a
daisy cube.

Finally, suppose that \(d\ge2\). Choose neighbors \(u_0,v_0\) of \(c\)
in two distinct factors of \(C\), and let \(w_0\) be their common neighbor
in the resulting Cartesian square. In \(H\), consider the edges
\begin{align*}
 e&=(c,01)(v_0,01),\\
 f&=(c,10)(u_0,10),\\
 g&=(v_0,00)(w_0,00).
\end{align*}
For the ordered endpoints
\[
\begin{aligned}
 A&=(c,01), &B&=(v_0,01), &X&=(c,10),\\
 Y&=(u_0,10), &P&=(v_0,00), &Q&=(w_0,00),
\end{aligned}
\]
the relevant distances in \(H\) are
\begin{align*}
 &d(A,X)=d(B,Y)=4,
 &&d(A,Y)=d(B,X)=3,\\
 &d(X,P)=d(Y,Q)=2,
 &&d(X,Q)=d(Y,P)=3,\\
 &d(A,P)=d(B,Q)=2,
 &&d(A,Q)=3,\quad d(B,P)=1.
\end{align*}
The graph \(C-c\) is connected, and both surviving vertices over \(c\)
have a neighbor in \((C-c)\square Q_2\); hence \(H\) is connected.
Consequently, \(e\mathrel{\Theta}f\) and \(f\mathrel{\Theta}g\), but
\(e\mathrel{\not\Theta}g\). The relation \(\Theta\) is not transitive, so
the connected bipartite graph \(H\) is not a partial cube.
\end{proof}

We now determine the peripheral profile of every isometric two-corner
deletion.

\begin{lemma}[Peripheral profile]\label{lem:corner-peripheral-profile}
Suppose that \(H=H_{r,s}(a,b)\) is isometric in \(B_{r,s}\), and put
\(p=p(a,b)\), \(q=q(a,b)\). Then
\[
 H\text{ is a daisy cube}
 \quad\Longleftrightarrow\quad
 q=0\ \text{or}\ (p,q)=(0,1).
\]
More precisely, unless \((p,q)=(0,1)\), the non-peripheral
\(\Theta\)-classes of \(H\) are exactly the \(q\) binary-coordinate classes
in which \(a\) and \(b\) differ.
\end{lemma}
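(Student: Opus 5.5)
Since $H$ is isometric in $B_{r,s}$, Lemma~\ref{lem:isometric-theta} identifies the $\Theta$-classes of $H$ with the nonempty restrictions of the classes of $B_{r,s}$, and the halfspaces of $H$ are the ambient halfspaces minus $\{a,b\}$. There are $2r+s$ ambient classes: two per $P_3$-factor (levels $0|1$ and $1|2$) and one per binary factor. Every class retains edges, except in degenerate tiny cases that can be handled directly. The plan is to test each class for peripherality using Theorem~\ref{thm:peripheral-characterization}. In $B_{r,s}$, the peripheral side of a $P_3$-class is its extreme side (level $0$ or level $2$), and both sides of a binary class are peripheral. Deleting a vertex $z$ can spoil a side $W$ only by removing the $\theta$-partner of a surviving vertex of $W$, namely the neighbour $z'$ of $z$ across $\theta$ with $z'\in W\setminus\{a,b\}$.

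\emph{$P_3$-classes.} Fix a $P_3$-coordinate $i$ and its class $0|1$, and consider the extreme side at level $0$. It loses a vertex's partner only if a deleted corner has value $1$ in coordinate $i$. Corners never have value $1$, so this side remains peripheral in $H$. The same argument applies to class $1|2$ with its level-$2$ side. Thus all $2r$ $P_3$-classes remain peripheral, whatever $p$ is.

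\emph{Binary classes where $a,b$ agree.} Suppose $a_{r+j}=b_{r+j}=\varepsilon$. Take the side $W$ with bit $\varepsilon$; its partners lie on the other side, which contains no deleted vertex. Hence $W$ stays peripheral.

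\emph{Binary classes where $a,b$ differ.} The vertex $a$ lies on one side and its partner $a'$ (flip bit $j$) on the other; symmetrically for $b$. The partner $a'$ survives unless $a'=b$, which happens exactly when $a,b$ differ only in coordinate $j$, i.e.\ $(p,q)=(0,1)$.
\begin{itemize}
\item If $a'\ne b$ and $b'\ne a$, then $a'$ is a nonincident vertex on $b$'s side and $b'$ is a nonincident vertex on $a$'s side. So this class is non-peripheral, with the certificate of Corollary~\ref{cor:certificate}.
\item If $(p,q)=(0,1)$, then $a,b$ form an edge of this class. Removing that edge's two endpoints leaves every other vertex with its partner, so both sides are peripheral.
\end{itemize}
One must also confirm that such a class keeps at least one edge and is a genuine class of $H$. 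This fails only if $B_{r,s}=Q_1$, where $H$ would be empty.

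Combining the three cases, the non-peripheral classes are exactly the $q$ differing binary classes unless $(p,q)=(0,1)$, in which case there are none. By Theorem~\ref{thm:peripheral-characterization}, $H$ is daisy iff $q=0$ or $(p,q)=(0,1)$. The main subtlety I expect is the bookkeeping that these exhaust the classes of $H$ and that none disappears. This includes checking that isometry (which excludes $(p,q)=(0,2)$ by Corollary~\ref{cor:corner-isometry}) is all that is needed, and treating small degenerate cases such as $r+s\le 1$ separately.
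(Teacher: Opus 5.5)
Your proposal is correct and follows essentially the same route as the paper: inherit classes and halfspaces via Lemma~\ref{lem:isometric-theta}, observe that the extreme sides of the $P_3$-classes and the common-bit side of each agreeing binary class stay peripheral because deleted corners never sit at level $1$, and use the two stranded partners $a^{(j)},b^{(j)}$ on opposite sides to certify non-peripherality of each differing binary class, except in the adjacent case $(p,q)=(0,1)$. Your extra check that the relevant classes keep at least one edge in $H$ is a harmless refinement that the paper leaves implicit.
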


\begin{proof}
By Lemma~\ref{lem:isometric-theta}, the \(\Theta\)-classes and halfspaces of
\(H\) are inherited from \(B_{r,s}\).

Consider a \(P_3\)-coordinate. For the class between levels \(0\) and \(1\),
the level-\(0\) halfspace remains peripheral: the level-\(1\) neighbor of a
remaining level-\(0\) vertex cannot be a deleted corner. Similarly, for the
class between levels \(1\) and \(2\), the level-\(2\) halfspace remains
peripheral. Hence no \(P_3\)-class is an obstruction.

Next consider a binary coordinate \(j\). If \(a_j=b_j\), the halfspace with
their common bit remains peripheral. If \(a_j\ne b_j\) and
\((p,q)=(0,1)\), then \(a,b\) are the endpoints of one coordinate edge.
After deleting both endpoints, every remaining \(j\)-fiber is intact, so
both halfspaces of the surviving class are peripheral.

Finally, suppose \(a_j\ne b_j\) and the deleted corners are not adjacent.
Let \(a^{(j)}\) and \(b^{(j)}\) be obtained by toggling coordinate \(j\) in
\(a\) and \(b\), respectively. These are distinct remaining vertices.
Their only edges in the \(j\)th coordinate class would join them to \(a\)
and \(b\), respectively. They lie in opposite halfspaces, so neither
halfspace is peripheral. This applies to every one of the \(q\) differing
binary coordinates, and there are no other non-peripheral classes. The
claim follows from Theorem~\ref{thm:peripheral-characterization}.
\end{proof}

We can now classify all ambient-isometric two-vertex deletions, not only
the corner cases.  The opposite-corner subfamilies with $r=0$ and $r=1$
are, respectively, the previously known graphs $Q_s^{--}$ and
$H_{s+2}^{--}$ (with the small duplicate $P_4$); see Xie and Xu
\cite[Section~4]{XieXu2025}.  The theorem below places them in a complete
classification and adds all higher-$P_3$ cases.

\begin{theorem}[Isometric-deletion classification]
\label{thm:isometric-deletion-classification}
Let $x,y$ be distinct vertices of $B_{r,s}$ and assume that
$H=H_{r,s}(x,y)$ is nonempty and isometric in $B_{r,s}$.
\begin{enumerate}
\item The graph $H$ is a daisy cube if and only if either
  \begin{enumerate}
  \item $x,y$ are corners and $q=0$ or $(p,q)=(0,1)$; or
  \item $(r,s)=(1,0)$ and $\{x,y\}$ is a boundary $P_3$-edge.
  \end{enumerate}
\item The graph $H$ is a minimal forbidden pc-minor for daisy cubes if
and only if either
  \begin{enumerate}
  \item $x,y$ are opposite corners, $s\geq1$, and $2r+s\geq3$; or
  \item $(r,s)=(1,1)$ and $\{x,y\}$ is a boundary $P_3$-edge.
  \end{enumerate}
\end{enumerate}
In particular, every minimal obstruction in this theorem is isomorphic to
a graph obtained by deleting two opposite corners.
\end{theorem}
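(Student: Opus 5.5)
By Theorem~\ref{thm:ambient-isometry}, the hypothesis leaves exactly two branches. Either $x,y$ are corners with $(p,q)\ne(0,2)$, or $\{x,y\}$ is a boundary $P_3$-edge. I treat each branch separately throughout.

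\textbf{Part 1.} In the corner branch, the daisy criterion is exactly Lemma~\ref{lem:corner-peripheral-profile}. In the boundary-edge branch, Lemma~\ref{lem:boundary-profile} says that the deletion is daisy only in the degenerate case $(r,s)=(1,0)$, where it is $K_1$. This settles part~1.

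\textbf{Part 2, the opposite-corner family.} Let $x,y$ be opposite corners with $s\ge1$. Then $p=r$ and $q=s\geq 1$. The case $(p,q)=(0,1)$ means $r=0,s=1$, where $H$ is empty; this is consistent with $2r+s\geq3$ excluding it. By Lemma~\ref{lem:corner-peripheral-profile}, $H$ is non-daisy, so we must show that every proper pc-minor is daisy. It suffices to check single restrictions and contractions, since every proper pc-minor factors through one of them and daisy cubes are pc-minor-closed. The $\Theta$-classes are inherited by Lemma~\ref{lem:isometric-theta}.
\begin{itemize}
\item Restricting to a halfspace of a class separates the two opposite corners onto one side in that coordinate. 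The result is $B_{r',s'}$ with at most one corner removed, or a sub-box of that shape. Lemma~\ref{lem:products-daisy} shows it is daisy.
\item Contracting a binary class gives $B_{r,s-1}$ minus two opposite corners, with $q$ dropping to $s-1$. If $s-1\ge1$, the resulting graph contains no remaining binary class on which $a,b$ differ in a way that makes it non-peripheral. I will handle this by direct argument: contraction identifies the two deleted corners' images with surviving vertices, so at most the images $a',b'$ are removed.
\item Contracting a $P_3$-class (merging levels $0,1$ or $1,2$) gives $P_2$ in place of $P_3$. This turns the deletion into an opposite-corner deletion of a smaller grid in which some deleted vertex may reappear as the image of a surviving neighbour.
\end{itemize}

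The main obstacle is tracking these contractions precisely. Since the images of deleted vertices may be recovered, the contraction typically yields $B$ minus at most one corner, hence daisy by Lemma~\ref{lem:products-daisy}. For a binary class, the preimage of $a'$ is $\{a, a^{(j)}\}$ and $a^{(j)}$ survives, so $a'$ is restored. For a $P_3$-class, $a$ at level $0$ merges with its level-$1$ neighbour, which survives, so $a'$ is again restored. Thus both images are restored, and every contraction is the full product $B$, which is daisy. The condition $2r+s\ge3$ and $s\ge1$ excludes only the small cases $(r,s)=(0,1),(0,2),(1,0)$. For $(0,2)$ the result is disconnected. For $r\ge1$, $s=0$ the graph is daisy since $q=0$. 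Conversely, for nonopposite non-daisy corner pairs ($q\ge1$, with $p<r$ or $q<s$), restrict to the halfspace of a coordinate where $a,b$ agree, taking the side containing both. This is a proper pc-minor, still isometric, with the same $q\ge1$ differing binary coordinates, hence non-daisy by Lemma~\ref{lem:corner-peripheral-profile}, except when it becomes $(0,1)$-adjacent or $(0,2)$. I will check these boundary cases by choosing the coordinate appropriately, or by contracting a $P_3$-class.

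\textbf{Part 2, the boundary-edge family.} For a boundary $P_3$-edge with $r+s\ge3$, I would restrict to the halfspace of another factor containing the common value of $x,y$. This gives a smaller boundary-edge deletion, non-daisy by Lemma~\ref{lem:boundary-profile} whenever it is still not $(1,0)$. So minimality forces $(r,s)\in\{(1,1),(2,0)\}$. The case $(1,1)$ is $P_4$, which is minimal because its proper pc-minors are the paths $P_2$, $P_3$ and $K_1$. For $(2,0)$, contracting a class of the second factor yields $P_4$, so it is not minimal.

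\textbf{Final assertion.} The $(1,1)$ boundary-edge deletion is $P_4$, which equals the opposite-corner deletion of $B_{1,1}$ (Figure~\ref{fig:b11-p4}). Hence every minimal obstruction in the theorem is isomorphic to an opposite-corner deletion.
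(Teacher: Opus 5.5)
Your route is essentially the paper's. You reduce via Theorem~\ref{thm:ambient-isometry} to corner pairs and boundary $P_3$-edges, read off part~1 from Lemmas~\ref{lem:corner-peripheral-profile} and~\ref{lem:boundary-profile}, and check the elementary restrictions and contractions of the opposite-corner deletion. You then eliminate non-opposite pairs and larger boundary-edge deletions by restricting in a coordinate where $x,y$ agree. Checking the cases $r\in\{0,1\}$ and the minimality of $P_4$ directly, instead of citing Xie--Xu, is a harmless and more self-contained variation. However, two steps as written are wrong or unfinished. Neither is fatal.

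\textbf{Contractions.} Your claim that every contraction of the opposite-corner deletion is the full product is false for $P_3$-classes. Take $a_i=0$ and $b_i=2$, and contract the class between levels $0$ and $1$. The image of $a$ is indeed restored through its surviving level-$1$ neighbour. But $b$ is the only vertex of its fibre, so its image stays absent. The contraction is therefore $B_{r-1,s+1}$ minus the corner $b'$, and symmetrically for the class between levels $1$ and $2$. This graph is still daisy by Lemma~\ref{lem:products-daisy}, which is all your hedge ``at most one corner'' needs, so the conclusion survives once you correct the claim.

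Your first bullet has a similar problem. It says a binary contraction gives $B_{r,s-1}$ minus two opposite corners, which contradicts your later, correct observation that both fibres $\{a,a^{(j)}\}$ and $\{b,b^{(j)}\}$ keep a survivor. Discard that bullet. To justify ``full product'' in the binary case, also note that $a'\neq b'$ and that $a'b'$ is not an edge, since $(r,s)\notin\{(0,1),(0,2)\}$.

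\textbf{Deferred boundary cases.} The boundary cases you postpone in the converse for corner pairs never arise. Restricting to the halfspace of a coordinate where $a,b$ agree leaves both $p$ and $q$ unchanged. The original pair has $(p,q)\neq(0,1)$, because it is non-daisy with $q\geq1$, and $(p,q)\neq(0,2)$, because it is ambient-isometric. The restricted deletion is therefore automatically nonempty, ambient-isometric and non-daisy by Corollary~\ref{cor:corner-isometry} and Lemma~\ref{lem:corner-peripheral-profile}, exactly as in the paper. No extra coordinate choice or contraction is needed. With these two corrections the argument is complete.
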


\begin{proof}
Theorem~\ref{thm:ambient-isometry} says that the deleted pair consists of
two corners or is a boundary $P_3$-edge.  The daisy classification follows
immediately from Lemmas~\ref{lem:corner-peripheral-profile} and
\ref{lem:boundary-profile}.

We first handle minimality for a corner pair $a,b$.  Suppose that the
isometric deletion is minimal and non-daisy.  Then $q\geq1$ and $a,b$ are
not adjacent.  If they agree in a binary factor, restrict to the halfspace
fixing their common bit.  The result is
\[
 B_{r,s-1}-\{a',b'\}
\]
with the same values of $(p,q)$.  If they agree in a $P_3$-factor,
restrict to the extreme-level halfspace containing both; the result is
$B_{r-1,s}-\{a',b'\}$, again with the same $(p,q)$.  In either case the
proper pc-minor is still ambient-isometric and non-daisy, by
Theorem~\ref{thm:ambient-isometry} and
Lemma~\ref{lem:corner-peripheral-profile}.  This contradicts minimality.
Thus $a,b$ differ in every factor, so they are opposite.  Non-daisyness
gives $s\geq1$, while nonemptiness and ambient isometry exclude
$(r,s)=(0,1),(0,2)$; equivalently, $2r+s\geq3$.

Conversely, let $a,b$ be opposite, with $s\geq1$ and $2r+s\geq3$.
The deletion is an isometric non-daisy partial cube.  The cases $r=0,1$ are
precisely the minimal families cited above, so assume $r\geq2$.  For a binary
coordinate, either restriction is
$B_{r,s-1}$ with one corner deleted.  Contracting the coordinate gives the
full product $B_{r,s-1}$: each projected deleted point has a surviving
representative in its two-vertex fiber, and the two projected points are
distinct because $2r+s\geq3$.

For the class between levels $0$ and $1$ in a $P_3$-coordinate, the two
restrictions are a one-corner deletion of $B_{r-1,s}$ and a one-corner
deletion of $B_{r-1,s+1}$.  Contracting this class turns the factor into a
$P_2$-factor.  The projected point of the level-$0$ deleted corner survives
through its level-$1$ representative, whereas the projected level-$2$
corner remains absent.  Hence the contraction is a one-corner deletion of
$B_{r-1,s+1}$.  The class between levels $1$ and $2$ is symmetric.
Every elementary pc-minor is therefore a full product or a one-corner
deletion, and is a daisy cube by Lemma~\ref{lem:products-daisy}.  Since the
class of daisy cubes is pc-minor-closed, all proper pc-minors are daisy
cubes.

It remains to consider a boundary $P_3$-edge.  When $r+s\geq2$, retain
the distinguished $P_3$-factor and one additional factor, and fix every
other factor at the common coordinate of the deleted vertices by
restrictions.  If the retained additional factor is a $P_3$, restrict it
to the two-level halfspace containing its common endpoint.  The resulting
pc-minor is
\[
 P_3\square P_2-\{(0,0),(1,0)\}\cong P_4.
\]
The graph $P_4=Q_2^{-+}$ is a known minimal forbidden pc-minor
\cite[Section~4]{XieXu2025}.  Consequently, the boundary-edge
deletion is minimal only when it is itself this $P_4$, namely when
$(r,s)=(1,1)$; in every larger ambient product the displayed $P_4$ is a
proper non-daisy pc-minor.  Finally, this $P_4$ is isomorphic to the
opposite-corner deletion from $B_{1,1}$.
\end{proof}

We now add the non-isometric corner cases and obtain the complete
corner-deletion classification.

\begin{theorem}[Corner-deletion classification]\label{thm:corner-classification}
Let \(a,b\) be distinct corners of \(B_{r,s}\), assume
\(H=H_{r,s}(a,b)\) is nonempty, and put \(p=p(a,b)\), \(q=q(a,b)\).
Then:
\begin{enumerate}
\item \(H\) is a partial cube if and only if either \((p,q)\ne(0,2)\),
      or \((p,q)=(0,2)\) and \(r+s=3\);
\item if \(H\) is a partial cube, then it is a daisy cube if and only if
      \(q=0\) or \((p,q)=(0,1)\);
\item \(H\) is a minimal forbidden pc-minor for daisy cubes if and only if
      \(a,b\) are opposite corners, \(s\ge1\), and \(2r+s\ge3\).
\end{enumerate}
\end{theorem}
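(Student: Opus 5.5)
The theorem assembles earlier results, so the proof will be a case split on whether $(p,q)=(0,2)$, invoking Corollary~\ref{cor:corner-isometry}, Lemma~\ref{lem:exceptional-pair}, Lemma~\ref{lem:corner-peripheral-profile}, and Theorem~\ref{thm:isometric-deletion-classification}.

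\emph{Part 1.} If $(p,q)\neq(0,2)$, Corollary~\ref{cor:corner-isometry} shows that $H$ is isometric in $B_{r,s}$. Since $B_{r,s}$ is a partial cube, $H$ is then a partial cube. If $(p,q)=(0,2)$, Lemma~\ref{lem:exceptional-pair} says that $H$ is a partial cube exactly when $r+s=3$. This gives the stated equivalence.

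\emph{Part 2.} In the isometric branch, Lemma~\ref{lem:corner-peripheral-profile} gives the daisy criterion ``$q=0$ or $(p,q)=(0,1)$''. In the remaining branch, $(p,q)=(0,2)$ and $r+s=3$, Lemma~\ref{lem:exceptional-pair} shows that $H$ is not daisy. This agrees with the criterion, since there $q=2\neq0$ and $(p,q)\neq(0,1)$.

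\emph{Part 3.} If $H$ is isometric, Theorem~\ref{thm:isometric-deletion-classification}(2a) applies directly; its alternative (2b) concerns boundary edges, which are not corner pairs. I must also check that the stated family contains no exceptional pair. Opposite corners with $(p,q)=(0,2)$ force $r=0$ and $s=2$, so $2r+s=2<3$, and this case is excluded. It remains to show that the exceptional graphs with $(p,q)=(0,2)$ and $r+s=3$ are never pc-minor-minimal. This is the step I expect to carry the real content. I would first try to exhibit a proper non-daisy pc-minor. In the proof of Lemma~\ref{lem:exceptional-pair}, the class changing the binary coordinate $x$ is non-peripheral, witnessed by $u=(c,01)$ and $v=(c,10)$. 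When $d=1$, I would contract a class of the $C$-factor not separating these witnesses; when $\ell=2$, this means contracting the class $\chi_1$. The result should be the $\ell=1$ graph, i.e.\ $Q_3$ minus two antipodal-in-a-face vertices, which is a $6$-cycle with a pendant structure and remains non-daisy. In the case $\ell=1$ (so $(r,s)=(0,3)$), I would contract the first coordinate of $\varphi$, or restrict to the halfspace $t\ge1$ together with one of $u,v$, to obtain $P_4$ or $C_6$-like minors. Then I would verify by Theorem~\ref{thm:peripheral-characterization} that the minor retains a two-sided non-peripheral class; the natural candidate is $P_4=Q_2^{-+}$, which is obtained by restricting to the halfspace where the coordinate $y$ equals $0$ (or, alternatively, the $1-y$ coordinate) together with suitable endpoints. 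Since $P_4$ is non-daisy and is a proper pc-minor, minimality fails. I would record explicitly which restriction yields $P_4$ in each of the cases $(r,s)=(1,2)$ and $(0,3)$, using the labelling $\varphi$, so that the argument is checkable.
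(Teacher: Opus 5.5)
Parts~1 and~2 and the isometric half of Part~3 match the paper's proof. So does your reduction of the $\ell=2$ case: contracting the $\chi_1$ class (the class between levels $1$ and $2$) gives the $\ell=1$ graph, which is non-daisy by Lemma~\ref{lem:exceptional-pair}.

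The gap is in the remaining exceptional case $\ell=1$, i.e.\ $(r,s)=(0,3)$, which is the step you flagged as carrying the content.
\begin{itemize}
\item You describe that graph as ``a $6$-cycle with a pendant structure'', but it has six vertices and six edges. It is the $4$-cycle on the vertices $(1,x,y)$, with the leaves $u=(0,01)$ and $v=(0,10)$ attached at the opposite vertices $(1,0,1)$ and $(1,1,0)$.
\item None of the operations you name produces a non-daisy minor.
  \begin{itemize}
  \item Contracting the first coordinate of $\varphi$, the pendant edge at $u$, gives a $4$-cycle with one pendant vertex, which is daisy.
  \item The restrictions $H-u$ and $H-v$ give the same kind of graph, so they are daisy as well.
  \item Restricting to a halfspace of the $y$-class or of the $x$-class gives $P_3$, for instance $\{v,(1,1,0),(1,0,0)\}$.
  \end{itemize}
\item In fact every halfspace of this graph is $K_1$, $P_3$, or a $4$-cycle with one pendant vertex, all of which are daisy. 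Hence no sequence of restrictions can reach $P_4$.
\item ``Together with suitable endpoints'' is not a pc-minor operation: vertices cannot be added back after a restriction.
\end{itemize}

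The missing idea is to contract one of the two classes of the central $4$-cycle. These are the class that changes $x$ and the class that changes $y$, which you correctly identified as non-peripheral. Contracting the $x$-class merges $(1,0,1)$ with $(1,1,1)$, and $(1,0,0)$ with $(1,1,0)$. The $y$-edges then join the two merged vertices, so what remains is the path $u,\ [(1,0,1)],\ [(1,0,0)],\ v\cong P_4$. This is a proper non-daisy pc-minor, so the $(0,3)$ graph is not minimal. This is exactly the paper's argument, and together with your $\ell=2$ reduction it rules out both exceptional graphs.
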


\begin{proof}
The first assertion follows from
Corollary~\ref{cor:corner-isometry} and
Lemma~\ref{lem:exceptional-pair}. For the second, the isometric cases are covered
by Lemma~\ref{lem:corner-peripheral-profile}, while the additional partial
cubes with \((p,q)=(0,2)\) are non-daisy by
Lemma~\ref{lem:exceptional-pair}.

For (3), every ambient-isometric case is covered by
Theorem~\ref{thm:isometric-deletion-classification}.  It remains only to
exclude the additional partial cubes with $(p,q)=(0,2)$ and $r+s=3$.
In the notation of Lemma~\ref{lem:exceptional-pair}, the case $\ell=2$
contracts along the class between levels $1$ and $2$ to the case
$\ell=1$.  The latter graph is a $4$-cycle with a leaf attached at each of
two opposite vertices; contracting either cycle class produces $P_4$.
Thus both exceptional graphs have a proper non-daisy pc-minor and are not
minimal.  This proves (3).
\end{proof}

\subsection{The non-isometric noncorner branch}
\label{subsec:noncorner}

It remains to decide whether a deletion that distorts the ambient metric can
nevertheless be a partial cube in a different embedding.  For orbit bookkeeping,
define
\begin{align*}
 e&=\bigl|\{i:x_i=y_i\in\{0,2\}\}\bigr|,&
 m&=\bigl|\{i:x_i=y_i=1\}\bigr|,\\
 o&=\bigl|\{i:\{x_i,y_i\}=\{0,2\}\}\bigr|,&
 a&=\bigl|\{i:x_i\in\{0,2\},\ y_i=1\}\bigr|,\\
 b&=\bigl|\{i:x_i=1,\ y_i\in\{0,2\}\}\bigr|,&
 t&=s-q(x,y),
\end{align*}
where all displayed sets range over the $P_3$-coordinates.  We may exchange
$x,y$ so that $a\leq b$.  Then
\[
 \sigma(x,y)=(e,m,o,a,b;t,q),
 \qquad e+m+o+a+b=r,\quad t+q=s,
\]
is a complete invariant of the orbit of the unordered pair under
$\operatorname{Aut}(B_{r,s})$.  Indeed, the five entries before the semicolon
are precisely the five possible coordinate types, up to reflection, while
the two entries after it distinguish equal and unequal binary coordinates.

The next lemma is the structural reason that the remaining classification is
finite.

\begin{lemma}[Noncorner dimension reduction]
\label{lem:noncorner-dimension}
Suppose that at least one of $x,y$ is not a corner and that
$H=B_{r,s}-\{x,y\}$ is nonempty and not isometric in $B_{r,s}$.  If $H$ is a
partial cube, then
\[
 r+s\leq3.
\]
\end{lemma}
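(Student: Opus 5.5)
From the first part of the proof of Theorem~\ref{thm:ambient-isometry}, a noncorner non-isometric deletion always contains a local obstruction. Some deleted vertex $z$ has value $1$ in a $P_3$-coordinate $i$, and the path $z^-zz^+$ in coordinate $i$ loses its middle vertex. The plan is to show that, once there are at least four Cartesian factors, this local distortion is incompatible with transitivity of $\Theta$. Since $H$ is a connected bipartite graph, a failure of transitivity rules out a partial cube (the criterion of Djokovi\'c and Winkler). I will imitate the $d\ge2$ argument of Lemma~\ref{lem:exceptional-pair}: exhibit three edges $e,f,g$ of $H$ with $e\,\Theta\,f$ and $f\,\Theta\,g$ but $e\not\mathrel{\Theta} g$.

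First I note that $H$ is connected. Deleting two vertices from a product with at least four factors, each of minimum degree at least one, cannot disconnect it, since the connectivity is at least $r+s\ge4>2$. Next I choose three further factors $j,k,l$ different from $i$. The four-cycles through $z^-$ and $z^+$ in a factor $j$ are still present: the square on $z^-,z^-+e_j,\ldots$ avoids $z$. Consider the two parallel edges $z^-(z^-+e_j)$ and $z^+(z^++e_j)$. In $B_{r,s}$ they are not $\Theta$-related. In $H$, however, $d_H(z^-,z^+)$ becomes $4$, detouring through $z^-+e_k$ and $z^++e_k$ for some $k$, unless the second deleted vertex blocks this. Meanwhile $d_H(z^-+e_j,z^++e_j)=2$ via $z+e_j$, provided $z+e_j$ survives. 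This produces a $\Theta$-relation in $H$ between edges of distinct ambient classes, $j$ and $i$. The plan is to chain two such relations through a middle edge lying in coordinate $i$ near $z+e_j$ (for instance $(z+e_j)(z^++e_j)$). A third edge is chosen in coordinate $k$ or $l$ so that all six relevant distances stay ambient. Then the relations are $e\,\Theta\,f$ and $f\,\Theta\,g$, while $e$ and $g$ lie in different ambient directions and far from the damaged region, so $e\not\mathrel{\Theta}g$.

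The distance computations are local. Each of the relevant distances is at most $4$ and involves vertices within distance $2$ of $z$. The plan is to verify them by noting that the ambient distance is attained unless every ambient geodesic passes through $x$ or $y$. With at least three spare directions, Lemma~\ref{lem:cyclic-geodesics} supplies three internally disjoint geodesics whenever $u,v$ differ in at least three factors. The only distances that can change are therefore those between vertices differing in at most two factors with a deleted vertex inside the interval. I will list these explicitly.

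The main obstacle is the position of the second deleted vertex. It may coincide with one of the chosen auxiliary vertices, such as $z+e_j$ or $z^-+e_k$, or it may itself create a second distortion that destroys one of the intended relations. I will handle this by using the freedom of choosing $j,k,l$ among at least three other factors. Since the second vertex differs from $z$ in some set of coordinates, choose $j,k,l$ so that the second vertex lies at distance at least $2$ from every vertex used in the witness. When this is impossible, the second vertex is adjacent to $z$, or is $z^\pm$, which is exactly the $P_3$-edge case. In that case I will repeat the construction around whichever deleted endpoint is noncorner, using the obstruction at its own middle coordinate as in the proof of Theorem~\ref{thm:ambient-isometry}. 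If a uniform choice fails, I would fall back on a case split by the orbit invariant $\sigma(x,y)$. It suffices to treat $r+s=4$, since restricting extra factors to a halfspace fixing their common value (when $x,y$ agree there) preserves both the partial-cube property and the local obstruction. This leaves a finite list of patterns, each settled by the same triple-of-edges certificate.
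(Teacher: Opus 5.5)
\textbf{There is a genuine gap.} The local computation that is supposed to produce the non-transitive triple is wrong, and no verified triple is ever exhibited.

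The edges $z^-(z^-+e_j)$ and $z^+(z^++e_j)$ project to the same edge of factor $j$, so they \emph{are} $\Theta$-related in $B_{r,s}$ (the sums are $2+2\neq3+3$). In $H$ you have $d_H(z^-,z^+)=4$, $d_H(z^-+e_j,z^++e_j)=2$, and both cross distances equal to $3$. This gives $4+2=3+3$, so the deletion \emph{destroys} this relation rather than creating one. Moreover, both edges are $j$-edges, so nothing here relates a $j$-class to an $i$-class. Your suggested middle edge $(z+e_j)(z^++e_j)$ fails too. It is not related to $z^-(z^-+e_j)$, since the sums are $2+2$ and $3+1$. It shares the vertex $z^++e_j$ with $z^+(z^++e_j)$, so it is not related to that edge either. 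Everything after this is only promised: the third edge, the explicit list of changed distances, and the choice of $j,k,l$.

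The fallback is also unsound. The layer of $H$ over a common value of $x,y$ in a factor $l$ is not convex in $H$: the $H$-geodesic $z^-,z^-+e_l,z+e_l,z^++e_l,z^+$ leaves it. So that layer is not a halfspace, restricting to it is not a pc-minor step, and nothing guarantees it stays a partial cube. In addition, for $r+s\geq5$ the pair $x,y$ may differ in every factor, so no such restriction is available at all.

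\textbf{How the idea can be repaired.} Exploit the relation that is broken. Take $e=z^-(z^-+e_j)$, $f=(z^-+e_k)(z^-+e_k+e_j)$ and $g=z^+(z^++e_j)$, with fixed steps in directions $j,k$. Then:
\begin{itemize}
\item $e\,\Theta\,f$, because they are opposite sides of a square;
\item $f\,\Theta\,g$, because the two sums are $3+3$ and at least $4+4$;
\item $e\mathrel{\not\Theta}g$, because $4+2=3+3$.
\end{itemize}
All distances needed here are realized inside the $3\times2\times2$ block at $z$ spanned by coordinate $i$ and the chosen $j,k$ steps. Once $z$ is chosen so that $z^\pm$ survive (as you propose, switching endpoints in the $P_3$-edge case), the other deleted vertex differs from $z$ in some factor $l\neq i$. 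Since $r+s\geq4$, you can take $j,k\neq l$, so the block is intact.

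\textbf{The paper's route.} The paper avoids this bookkeeping by using the partial-cube hypothesis directly. Put $u=z^-$ and $v=z^+$. The $r+s-1\geq3$ paths $u,u^{(j)},z^{(j)},v^{(j)},v$ have pairwise disjoint interiors, so two of them survive and are geodesics of length $4$. The surviving square $u,u^{(j)},u^{(jk)},u^{(k)}$ must then be a face of the hypercube embedding. This forces $d_H(u^{(jk)},v)=2$, whereas $d_{B_{r,s}}(u^{(jk)},v)=4$, a contradiction. If $u^{(jk)}$ is the deleted vertex, the same argument runs with $v^{(jk)}$ instead.
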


\begin{proof}
We first choose a deleted vertex $z$ and a $P_3$-coordinate $i$ such that
$z_i=1$ and the two vertices obtained from $z$ by replacing $z_i$ with $0$
and $2$ both survive.  Start with any noncorner deleted vertex.  If the other
deleted vertex is neither of these two neighbors, the choice is immediate.
Otherwise the two deleted vertices form a $P_3$-edge.  This edge is not a
boundary $P_3$-edge, since such a deletion is ambient-isometric by
Theorem~\ref{thm:ambient-isometry}.  Hence its endpoint with value $0$ or $2$
is still a noncorner and has value $1$ in another $P_3$-coordinate.  Choosing
that endpoint and that other coordinate gives the required pair.  Denote the
two surviving vertices by $u$ and $v$.

Put $d=r+s$.  For each factor $j\ne i$, choose one edge direction incident
with $z$ in that factor, and denote the resulting neighbor by $z^{(j)}$.
Apply the same coordinate change to $u$ and $v$ to obtain $u^{(j)}$ and
$v^{(j)}$.  In the ambient product,
\[
 P_j:\quad u,u^{(j)},z^{(j)},v^{(j)},v
\]
is a path of length four.  The three internal vertices of $P_j$, as $j$
varies, form pairwise disjoint sets.  Consequently, the second deleted vertex
can destroy at most one of the chosen paths.

Assume that $d\geq4$.  Then two paths $P_j,P_k$ with $j\ne k$ survive.
The unique ambient $u,v$-path of length two is $u,z,v$, and the product is
bipartite.  It follows that
\[
 d_H(u,v)=4,
\]
so both $P_j$ and $P_k$ are geodesics.  In a partial cube, the two distinct
edges $uu^{(j)}$ and $uu^{(k)}$ therefore belong to distinct $\Theta$-classes,
and both classes separate $u$ from $v$.

Let $u^{(jk)}$ be obtained from $u$ by applying both chosen coordinate
changes.  Unless $u^{(jk)}$ is the second deleted vertex, the four vertices
\[
 u,\ u^{(j)},\ u^{(jk)},\ u^{(k)}
\]
form a surviving $4$-cycle.  In any hypercube embedding of a partial cube,
this cycle is a two-dimensional face.  Thus $u^{(jk)}$ is obtained from $u$
by flipping exactly the two coordinates of the classes containing
$uu^{(j)}$ and $uu^{(k)}$.  Both coordinates separate $u,v$, whence
\[
 d_H(u^{(jk)},v)=d_H(u,v)-2=2.
\]
On the other hand, their distance in the ambient product is
\[
 d_{B_{r,s}}(u^{(jk)},v)=2+1+1=4,
\]
and deleting vertices cannot decrease distance, a contradiction.

If $u^{(jk)}$ is the second deleted vertex, use instead the corresponding
$4$-cycle based at $v$.  Its fourth vertex $v^{(jk)}$ is distinct from
$u^{(jk)}$, so the cycle survives; the same argument gives
$d_H(u,v^{(jk)})=2$ although its ambient distance is $4$.  This final
contradiction proves $d\leq3$.
\end{proof}

The finite residue is recorded next.  In the representatives below, a
semicolon separates the $P_3$-coordinates from the binary coordinates.  Let
$\nu(H)$ be the number of non-peripheral $\Theta$-classes.  The final column
gives the order of a certified proper non-daisy pc-minor $K$ of $H$.

\begin{lemma}[Computer-assisted finite residue]
\label{lem:noncorner-finite}
Assume the hypotheses of Lemma~\ref{lem:noncorner-dimension} and $r+s\leq3$.
Then $H$ is a partial cube if and only if, up to an automorphism of
$B_{r,s}$, the unordered pair $\{x,y\}$ occurs in Table~\ref{tab:noncorner}.
Every graph in the table is non-daisy and is not pc-minor-minimal.
\end{lemma}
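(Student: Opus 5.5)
The plan is an exhaustive finite computation organized by the orbit invariant $\sigma(x,y)=(e,m,o,a,b;t,q)$, with each outcome backed by a checkable certificate. With $r+s\leq3$ there are only finitely many pairs $(r,s)$, namely $(r,s)\in\{(1,0),(2,0),(3,0),(1,1),(2,1),(1,2)\}$; pairs with $r=0$ have only corners and fall outside the hypotheses. For each such $(r,s)$ I would list every tuple $\sigma$ with $e+m+o+a+b=r$, $t+q=s$ and $a\leq b$. I would then discard the tuples that describe two corners ($m=a=b=0$) and those that Theorem~\ref{thm:ambient-isometry} declares ambient-isometric, namely boundary $P_3$-edges. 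Since $\sigma$ is a complete orbit invariant, this leaves one representative per orbit; the text says there should be forty. For each representative I would build $H=B_{r,s}-\{x,y\}$ explicitly. The graph has at most $3^3=27$ or $3^2\cdot 2=18$ vertices, so every later step is a small computation.

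The partial-cube decision for each representative goes as follows.
\begin{enumerate}
\item Check that $H$ is connected and bipartite.
\item Compute all distances by BFS.
\item Compute the Djokovi\'c--Winkler relation $\Theta$ on $E(H)$ and test transitivity.
\end{enumerate}
For the negative orbits (about $29$ of them), the certificate is a connectivity failure or a triple of edges $e,f,g$ with $e\,\Theta\,f$, $f\,\Theta\,g$ and $e\not\mathrel{\Theta} g$. Each such triple is verified from six to eight distances, exactly as in Lemma~\ref{lem:exceptional-pair}. For the positive orbits (the eleven of Table~\ref{tab:noncorner}), I would exhibit the standard $\Theta$-labelling $\lambda$ and verify that $d_H=d_{\mathrm{Ham}}\circ\lambda$ on all pairs. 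This certifies a hypercube embedding directly, without depending on the correctness of the $\Theta$ computation.

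Next comes non-daisyness. For each positive orbit, Corollary~\ref{cor:certificate}(2) provides the certificate: a class $\theta$ together with vertices $x_0,x_1$ on opposite sides that are not incident with $\theta$. Theorem~\ref{thm:peripheral-characterization} then gives non-daisy, and I would also record $\nu(H)$. For non-minimality, I would search breadth-first over elementary pc-minors, that is, restrictions and contractions of single $\Theta$-classes. The goal is a proper pc-minor $K$ that is still non-daisy, again certified by a non-peripheral class. Where possible I would aim for $K\cong P_4$, or for one of the opposite-corner deletions already classified in Theorem~\ref{thm:isometric-deletion-classification}. For the sporadic examples such as the $P_7$ of Remark~\ref{rem:nonisometric-scope}, a hand argument is easy: contracting one class of a non-daisy path shortens it while keeping a non-peripheral middle class. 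The order of $K$ goes into the last column.

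The main obstacle is making the computation trustworthy and complete rather than any single case. Two things must hold. First, the enumeration of $\sigma$-tuples must cover every orbit exactly once. This rests on the complete-invariant claim together with the normalization $a\leq b$, which I would double-check by brute-force orbit counting under $\operatorname{Aut}(B_{r,s})$, using coordinate permutations within factor types and reflections. Second, each certificate must be independently verifiable. So the program should output explicit witnesses: a non-transitive triple, an embedding, a non-peripheral class with two witnesses, and a minor sequence. It should not output bare yes/no answers, because the remaining proof is then only the routine check of these witnesses recorded in Appendix~A.
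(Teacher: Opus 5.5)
Your plan matches the paper's proof. You reduce via the complete invariant $\sigma$ to the forty orbit representatives in $B_{1,1},B_{2,0},B_{1,2},B_{2,1},B_{3,0}$. You certify partial cubes by connectedness, bipartiteness and transitivity of $\Theta$; your extra check of the $\Theta$-labelling against $d_H$ is redundant by Winkler's theorem but harmless. You refute the other orbits by disconnection or a non-transitive triple, and for each of the eleven positive orbits you exhibit a non-peripheral class and a proper non-daisy elementary pc-minor.

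One bookkeeping fix is needed. When listing $\sigma$-tuples you must also discard those with $o+a+b+q=0$, because they encode $x=y$; for example, $(0,2,0,0,0;0,0)$ in $B_{2,0}$ gives $x=y=(1,1)$. With that exclusion the count is exactly forty.
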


\begin{table}[ht]
\centering
\small
\begin{tabular}{@{}ccccc@{}}
\hline
Ambient graph & deleted pair $\{x,y\}$ & $\operatorname{idim}H$
 & $\nu(H)$ & $|V(K)|$\\
\hline
$B_{2,0}$ & $\{(1,1),(0,0)\}$ & 6 & 4 & 4\\
$B_{2,0}$ & $\{(0,1),(2,0)\}$ & 5 & 2 & 4\\
$B_{2,0}$ & $\{(1,1),(1,0)\}$ & 6 & 4 & 4\\
$B_{2,0}$ & $\{(1,0),(1,2)\}$ & 6 & 2 & 6\\
$B_{1,2}$ & $\{(1;00),(0;01)\}$ & 5 & 3 & 5\\
$B_{1,2}$ & $\{(1;00),(1;01)\}$ & 5 & 2 & 5\\
$B_{2,1}$ & $\{(0,1;0),(1,0;0)\}$ & 6 & 4 & 5\\
$B_{2,1}$ & $\{(1,1;0),(1,1;1)\}$ & 5 & 4 & 8\\
$B_{2,1}$ & $\{(0,1;0),(0,0;1)\}$ & 6 & 3 & 8\\
$B_{2,1}$ & $\{(0,1;0),(0,1;1)\}$ & 6 & 2 & 8\\
$B_{3,0}$ & $\{(0,0,1),(0,1,0)\}$ & 7 & 4 & 6\\
\hline
\end{tabular}
\caption{The eleven non-isometric noncorner partial-cube orbits.}
\label{tab:noncorner}
\end{table}

\begin{proof}
The signature $\sigma$ reduces the assertion to forty unordered-pair orbits
in the five products with a $P_3$-factor and at most three factors:
\[
 B_{1,1},\quad B_{2,0},\quad B_{1,2},\quad B_{2,1},\quad B_{3,0}.
\]
Products with $r=0$ have no noncorners, while $B_{1,0}=P_3$ has no
non-isometric noncorner deletion.
The check is exact and uses only the defining distance formula for $\Theta$.
For each orbit representative, construct the induced graph $H$, compute its
integer distance matrix, and for edges $e=ab$ and $f=uv$ set
\begin{equation}\label{eq:finite-theta}
 e\,\Theta\,f
 \quad\Longleftrightarrow\quad
 d(a,u)+d(b,v)\ne d(a,v)+d(b,u).
\end{equation}
A reflexive symmetric relation is transitive precisely when every two related
elements have identical relation rows.  Thus \eqref{eq:finite-theta}, together with
connectedness and bipartiteness, is a direct certificate for being a partial
cube.

Exactly the eleven rows in Table~\ref{tab:noncorner} pass this test.  Of the
other twenty-nine orbits, three are disconnected; each of the remaining
twenty-six has edges $e,f,g$ satisfying
\[
 e\,\Theta\,f,\qquad f\,\Theta\,g,
 \qquad e\mathrel{\not\Theta}g.
\]
For every positive row, the halfspaces obtained from its $\Theta$-classes
give the displayed isometric dimension and number $\nu(H)>0$.  Hence the graph
is non-daisy by Theorem~\ref{thm:peripheral-characterization}.  Finally, an
elementary restriction or contraction gives a proper non-daisy pc-minor of
the order shown in the last column.  Appendix~\ref{app:finite-verification}
specifies the exhaustive certificate procedure; the accompanying source file
prints the relation witness for every negative orbit and the minor certificate
for every positive orbit.
\end{proof}

Combining the isometric, corner, and noncorner branches gives the promised
complete result.  To state it economically, introduce the following three
conditions on an unordered pair $\{x,y\}\subseteq V(B_{r,s})$:
\begin{description}
\item[$\mathsf P_{r,s}(x,y)$:] either $x,y$ are corners and
      $[(p,q)\ne(0,2)$ or $(p,q)=(0,2)$ and $r+s=3]$; or $\{x,y\}$ is a
      boundary $P_3$-edge; or $\{x,y\}$ represents one of the eleven
      orbits in Table~\ref{tab:noncorner}.
\item[$\mathsf D_{r,s}(x,y)$:] either $x,y$ are corners and
      $[q=0$ or $(p,q)=(0,1)]$; or $(r,s)=(1,0)$ and $\{x,y\}$ is a
      boundary $P_3$-edge.
\item[$\mathsf M_{r,s}(x,y)$:] either $x,y$ are opposite corners,
      $s\geq1$, and $2r+s\geq3$; or $(r,s)=(1,1)$ and $\{x,y\}$ is a
      boundary $P_3$-edge.
\end{description}

\begin{theorem}[Complete deletion classification]
\label{thm:complete-deletion}
For every nonempty deletion $H=H_{r,s}(x,y)$,
\[
\begin{aligned}
 H\text{ is a partial cube}&\iff \mathsf P_{r,s}(x,y),\\
 H\text{ is a daisy cube}&\iff \mathsf D_{r,s}(x,y),\\
 H\in\cO_{\mathrm D}&\iff \mathsf M_{r,s}(x,y).
\end{aligned}
\]
In particular, no non-isometric noncorner deletion is pc-minor-minimal.
\end{theorem}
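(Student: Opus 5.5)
The proof is a case split on the deleted pair $\{x,y\}$ into three disjoint and exhaustive branches: (i) both vertices are corners; (ii) $\{x,y\}$ is a boundary $P_3$-edge; (iii) at least one vertex is a noncorner and $\{x,y\}$ is not a boundary $P_3$-edge. These branches are exhaustive and disjoint because a boundary $P_3$-edge has the noncorner endpoint $c'$. Branch (iii) is non-isometric by Theorem~\ref{thm:ambient-isometry}. For each branch I will check that the three equivalences agree with the corresponding clauses of $\mathsf P$, $\mathsf D$ and $\mathsf M$, observing that each predicate is a disjunction of branch-specific clauses.

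\emph{Branch (i).} Theorem~\ref{thm:corner-classification} gives the partial-cube, daisy and minimality statements verbatim. These coincide with the corner clauses of $\mathsf P_{r,s}$, $\mathsf D_{r,s}$ and $\mathsf M_{r,s}$. The boundary-edge and table clauses cannot apply to a corner pair, so nothing further is needed.

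\emph{Branch (ii).} By Theorem~\ref{thm:ambient-isometry} the deletion is isometric, hence a partial cube, which matches the boundary-edge clause of $\mathsf P$. Lemma~\ref{lem:boundary-profile} shows it is daisy exactly when $(r,s)=(1,0)$, and part (2) of Theorem~\ref{thm:isometric-deletion-classification} shows it is minimal exactly when $(r,s)=(1,1)$. The corner clauses of $\mathsf D$ and $\mathsf M$ do not apply, since $c'$ is not a corner. Here I will also check that the representatives in Table~\ref{tab:noncorner} are genuinely non-isometric noncorner pairs and therefore never boundary edges. This holds because Lemma~\ref{lem:noncorner-finite} is stated under the hypotheses of Lemma~\ref{lem:noncorner-dimension}. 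Since membership in an orbit is automorphism-invariant, no pair outside branch (iii) can represent a table orbit.

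\emph{Branch (iii).} If $r+s\ge4$, Lemma~\ref{lem:noncorner-dimension} shows that $H$ is not a partial cube, and $\mathsf P$ fails because the table only involves $r+s\le3$. The table orbits live in the specific products listed, and an orbit is determined by $(r,s)$ together with the signature $\sigma$. If $r+s\le3$, Lemma~\ref{lem:noncorner-finite} gives exactly the table clause of $\mathsf P$. The same lemma shows that no such $H$ is daisy or in $\cO_{\mathrm D}$, so $\mathsf D$ and $\mathsf M$ must both fail here. They do, because their clauses require corners or a boundary edge. A non-partial-cube is trivially neither daisy nor in $\cO_{\mathrm D}$, since both notions are defined within partial cubes.

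The final sentence of the theorem is then the branch (iii) conclusion. The only delicate point, and the place where I would be most careful, is the bookkeeping that the three branches are disjoint, so that no clause of one predicate accidentally fires in another branch. In particular, a non-isometric noncorner pair must not be mistaken for a boundary $P_3$-edge; this is guaranteed by Theorem~\ref{thm:ambient-isometry}, which says every boundary $P_3$-edge deletion is isometric. No new mathematics is required beyond assembling the cited results.
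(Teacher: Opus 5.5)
Your proposal is correct and is essentially the paper's own proof: both assemble Theorem~\ref{thm:ambient-isometry}, Theorems~\ref{thm:isometric-deletion-classification} and~\ref{thm:corner-classification}, and Lemmas~\ref{lem:noncorner-dimension} and~\ref{lem:noncorner-finite}, and add nothing new. The differences are organizational. You split into corner pairs, boundary $P_3$-edges, and the remaining pairs, whereas the paper splits into ambient-isometric pairs, non-isometric corner pairs, and noncorner pairs. You also spell out the clause-disjointness and orbit-invariance bookkeeping that the paper leaves implicit.
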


\begin{proof}
For ambient-isometric deletions, all three assertions follow from
Theorem~\ref{thm:isometric-deletion-classification}.  The remaining corner
deletions are classified by Theorem~\ref{thm:corner-classification}.  If at
least one deleted vertex is a noncorner, Lemmas~\ref{lem:noncorner-dimension}
and~\ref{lem:noncorner-finite} give exactly the sporadic clause in
$\mathsf P_{r,s}(x,y)$ and show that every such exceptional graph is
non-daisy and nonminimal.
\end{proof}

\begin{corollary}[Corner-pair census]\label{cor:corner-census}
Assume $r+s\geq1$.  The automorphism orbits on unordered pairs of corners of $B_{r,s}$ are
indexed by $(p,q)$, where $0\leq p\leq r$, $0\leq q\leq s$, and
$(p,q)\ne(0,0)$.  The number of unordered corner pairs of type $(p,q)$ is
\[
 2^{r+s-1}\binom{r}{p}\binom{s}{q}.
\]
In particular, there are $2^{r+s-1}$ opposite-corner pairs, and they all
give isomorphic deletions.
\end{corollary}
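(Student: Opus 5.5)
The plan is to count ordered pairs of corners with a prescribed pattern of differing coordinates, divide by two, and then identify the orbits using the automorphism group. The corners of $B_{r,s}$ are the vertices in $\{0,2\}^r\times\{0,1\}^s$, so there are $2^{r+s}$ of them. Fix a corner $a$. A second corner $b$ has type $(p,q)$ exactly when $b$ differs from $a$ in a chosen set of $p$ of the $r$ $P_3$-coordinates, where it must take the other endpoint value, and in a chosen set of $q$ of the $s$ binary coordinates. In all remaining coordinates it agrees with $a$.

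This gives $\binom{r}{p}\binom{s}{q}$ choices of $b$. We need $(p,q)\ne(0,0)$, which is the same as $b\ne a$. Summing over $a$ gives $2^{r+s}\binom rp\binom sq$ ordered pairs. Since $p$ and $q$ are symmetric in $a,b$, each unordered pair is counted twice, and we obtain $2^{r+s-1}\binom rp\binom sq$. Taking $p=r$ and $q=s$ gives $2^{r+s-1}$ opposite pairs.

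For the orbit statement, first note that $(p,q)$ is an invariant. The automorphisms of $B_{r,s}$ are generated by reflections $t\mapsto 2-t$ of $P_3$-coordinates, bit flips, permutations of the $P_3$-factors, and permutations of the binary factors. All of these preserve corners and preserve the counts of differing coordinates of each factor type. Here I would use the standard fact that automorphisms of a Cartesian product of prime factors permute isomorphic factors. Alternatively, it is enough to note that a $P_3$-factor and a $K_2$-factor are distinguished intrinsically, for instance by the sizes of the halfspaces of their $\Theta$-classes. Since the census only needs transitivity within each type, it suffices to check invariance under the explicit generators, and I expect to argue with those.

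Next comes transitivity. Given two pairs $\{a,b\}$ and $\{a',b'\}$ of the same type, apply reflections and bit flips to send $a$ to $a'$; this group acts simply transitively on corners. Then permute the $P_3$-coordinates among themselves, and the binary coordinates among themselves, so that the differing positions of $b$ match those of $b'$. Such permutations fix $a'$, because $a'$ can be normalized to the all-$0$ corner. Once the positions match, $b$ is forced to equal $b'$, since a corner differing from $a'$ in a given $P_3$-coordinate has only one possible value there. Finally, opposite pairs form a single orbit, so their deletions are isomorphic.

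There is no real obstacle here. The only delicate point is to be honest about which automorphisms are used. Transitivity relies only on the explicit generators, while invariance of $(p,q)$ under the full automorphism group needs the factor-type argument sketched above.
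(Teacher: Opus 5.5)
Your proposal is correct and follows the paper's proof. You count $2^{r+s}\binom{r}{p}\binom{s}{q}$ ordered pairs and halve, get transitivity from reflections, bit flips and within-type coordinate permutations, and get invariance of $(p,q)$ from the fact that automorphisms of $B_{r,s}$ can only permute isomorphic prime factors, with $P_3\not\cong P_2$.

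If you use your alternative halfspace-size argument instead, it also needs automorphisms to send corners to corners. This holds because the corners are exactly the vertices of minimum degree $r+s$.
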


\begin{proof}
Reflections and permutations within the $P_3$-factors and within the
$P_2$-factors show that $(p,q)$ determines an orbit.  Conversely, the
Cartesian prime factors $P_3$ and $P_2$ are nonisomorphic, so product
automorphisms preserve the two numbers.  For each of the $2^{r+s}$ first
corners, there are $\binom rp\binom sq$ possible second corners of type
$(p,q)$; division by two gives the formula.
\end{proof}

For $3^r2^s\geq3$, let
$N_{\mathrm{iso}}(r,s)$, $N_{\mathrm{pc}}(r,s)$,
$N_{\mathrm{dc}}(r,s)$, and $N_{\mathrm{min}}(r,s)$ count the unordered
vertex pairs whose deletion is, respectively, ambient-isometric, a partial
cube, a daisy cube, and a member of $\cO_{\mathrm D}$.  Write
\[
 \mathbf N_{r,s}:=
 \bigl(N_{\mathrm{iso}}(r,s),N_{\mathrm{pc}}(r,s),
       N_{\mathrm{dc}}(r,s),N_{\mathrm{min}}(r,s)\bigr),
\]
and set
\begin{align*}
 \iota_{r,s}&:=\binom{2^{r+s}}{2}
 -2^{r+s-1}\binom{s}{2}+r2^{r+s},\\
 \pi_{r,s}&:=\iota_{r,s}
 +\boldsymbol{1}_{\{r+s=3\}}\,2^{r+s-1}\binom{s}{2}
 +\begin{cases}
 18,&(r,s)=(2,0),\\
 20,&(r,s)=(1,2),\\
 29,&(r,s)=(2,1),\\
 24,&(r,s)=(3,0),\\
 0,&\text{otherwise},
 \end{cases}\\
 \kappa_{r,s}&:=
 \begin{cases}
 3,&(r,s)=(1,0),\\
 2^{r+s-1}(2^r-1+s),&\text{otherwise},
 \end{cases}\\
 \omega_{r,s}&:=
 \begin{cases}
 6,&(r,s)=(1,1),\\
 2^{r+s-1},&s\geq1,\ 2r+s\geq3,\ (r,s)\ne(1,1),\\
 0,&\text{otherwise}.
 \end{cases}
\end{align*}

\begin{corollary}[Deletion census]\label{cor:deletion-census}
For $3^r2^s\geq3$,
\[
 \mathbf N_{r,s}
 =\bigl(\iota_{r,s},\pi_{r,s},\kappa_{r,s},\omega_{r,s}\bigr).
\]
\end{corollary}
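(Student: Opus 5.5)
The plan is to derive the census directly from Theorem~\ref{thm:complete-deletion}: sum the sizes of the automorphism orbits on unordered pairs that satisfy $\mathsf P_{r,s}$, $\mathsf D_{r,s}$ and $\mathsf M_{r,s}$, and, for $N_{\mathrm{iso}}$, those allowed by Theorem~\ref{thm:ambient-isometry}. The hypothesis $3^r2^s\ge3$ ensures that every deletion is nonempty, so those results apply to all pairs. There are two basic counts.
\begin{enumerate}
\item The corner pairs of type $(p,q)$ number $2^{r+s-1}\binom rp\binom sq$, by Corollary~\ref{cor:corner-census}. All corner pairs together give $\binom{2^{r+s}}2$.
\item The boundary $P_3$-edges number $r2^{r+s}$. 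Each corner has exactly one boundary $P_3$-edge in each of its $r$ $P_3$-coordinates. Conversely, each boundary $P_3$-edge has exactly one corner endpoint, because its other endpoint has value $1$ in the distinguished coordinate. Hence the map from boundary edges to (corner, $P_3$-coordinate) pairs is a bijection.
\end{enumerate}
Boundary $P_3$-edges are never corner pairs, so these two families are disjoint.

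For $N_{\mathrm{iso}}$, Theorem~\ref{thm:ambient-isometry} gives all corner pairs except those of type $(0,2)$, together with all boundary $P_3$-edges. This is $\binom{2^{r+s}}2-2^{r+s-1}\binom s2+r2^{r+s}=\iota_{r,s}$. As a sanity check, $(r,s)=(1,0)$ gives $1+2=3$, which is all pairs of $P_3$.

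For $N_{\mathrm{pc}}$, we add two further contributions to $\iota_{r,s}$.
\begin{enumerate}
\item The $(0,2)$ corner pairs when $r+s=3$, which are partial cubes by Theorem~\ref{thm:corner-classification}.
\item The sporadic noncorner pairs, that is, the sum of the orbit sizes of the rows of Table~\ref{tab:noncorner}, grouped by ambient graph. $B_{1,1}$ contributes nothing.
\end{enumerate}
Each orbit size is computed as $|\operatorname{Aut}(B_{r,s})|$ divided by the stabilizer of the unordered pair, or by a direct count. For $B_{2,0}$, the automorphism group is dihedral of order $8$, and the four rows have the following sizes.
\begin{enumerate}
\item $\{(1,1),(0,0)\}$ (center and corner): $4$.
\item $\{(0,1),(2,0)\}$ (edge midpoint and nonadjacent corner): $4\cdot2=8$.
\item $\{(1,1),(1,0)\}$ (center and midpoint): $4$.
\item $\{(1,0),(1,2)\}$ (opposite midpoints): $2$.
\end{enumerate}
These sum to $18$. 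The same bookkeeping, using $\sigma$ and choosing coordinates of each type, should give $20$, $29$ and $24$ for $B_{1,2}$, $B_{2,1}$ and $B_{3,0}$. I would cross-check these totals against the program in Appendix~\ref{app:finite-verification}.

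For $N_{\mathrm{dc}}$, Theorem~\ref{thm:complete-deletion} gives the corner pairs with $q=0$, $p\ge1$. Their number is $2^{r+s-1}\sum_{p\ge1}\binom rp=2^{r+s-1}(2^r-1)$. We add the type-$(0,1)$ pairs, of which there are $2^{r+s-1}s$. The total is $2^{r+s-1}(2^r-1+s)$. The only exception is $(r,s)=(1,0)$, where the two boundary edges also count, giving $1+2=3$. None of the other partial cubes is daisy: the exceptional $(0,2)$ pairs fail by Lemma~\ref{lem:exceptional-pair}, and the sporadic orbits fail by Lemma~\ref{lem:noncorner-finite}. So nothing else is added.

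For $N_{\mathrm{min}}$, the opposite corners contribute $2^{r+s-1}$ when $s\ge1$ and $2r+s\ge3$, and nothing otherwise. At $(1,1)$ we add the $r2^{r+s}=4$ boundary $P_3$-edges, which gives $2+4=6$.

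The only nonroutine step is the exact orbit sizes of the sporadic rows in $B_{1,2}$, $B_{2,1}$ and $B_{3,0}$. Here the stabilizer computations are error-prone, particularly for rows such as $\{(1,1;0),(1,1;1)\}$ with extra symmetry. I would first count each row by hand through the signature $\sigma$: multiply the choices of which coordinates have each type by the reflection choices, and halve when $x,y$ are interchangeable, i.e.\ $a=b$. I would then confirm the totals by exhaustive enumeration.
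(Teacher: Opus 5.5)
Your proposal is correct and follows the paper's proof essentially step by step. The same corner-pair and boundary-edge counts give $\iota_{r,s}$; the $(0,2)$ pairs for $r+s=3$ and the sporadic orbit sizes are added for $\pi_{r,s}$; and $\kappa_{r,s}$ and $\omega_{r,s}$ come from summing Corollary~\ref{cor:corner-census} plus the boundary edges in $B_{1,0}$ and $B_{1,1}$. Your signature count for the three deferred products does give $16+4=20$, $8+1+16+4=29$ and $3\cdot2\cdot4=24$, whereas the paper simply cites the signature or the enumeration in Appendix~\ref{app:finite-verification}.
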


\begin{proof}
There are $\binom{2^{r+s}}2$ corner pairs.  By
Corollary~\ref{cor:corner-census}, exactly
$2^{r+s-1}\binom{s}{2}$ have type $(0,2)$, and hence fail ambient
isometry.  There are $r2^{r+s}$ boundary $P_3$-edges: choose their corner
endpoint and their $P_3$-factor.  This gives
$N_{\mathrm{iso}}(r,s)=\iota_{r,s}$ by
Theorem~\ref{thm:ambient-isometry}.

For the partial-cube count, ambient-isometric deletions contribute $\iota$.
When $r+s=3$, Lemma~\ref{lem:exceptional-pair} adds back the
$2^{r+s-1}\binom{s}{2}$ exceptional corner pairs of type $(0,2)$.
Table~\ref{tab:noncorner} contributes respectively $18,20,29,24$ actual
unordered pairs in the four indicated products; these orbit sizes follow
either directly from the signature or from the exact enumeration in
Appendix~\ref{app:finite-verification}.  No other cases occur by
Theorem~\ref{thm:complete-deletion}.  Thus
$N_{\mathrm{pc}}(r,s)=\pi_{r,s}$.

For a daisy deletion of two corners, either $q=0$ and $p\geq1$, or
$(p,q)=(0,1)$.  Summing Corollary~\ref{cor:corner-census} gives the second
branch of $\kappa$.  For $B_{1,0}=P_3$, its corner pair and two boundary
$P_3$-edges give the exceptional value $3$.  Finally, opposite-corner pairs and the four
boundary $P_3$-edges of $B_{1,1}$ give
$N_{\mathrm{min}}(r,s)=\omega_{r,s}$, by
Theorem~\ref{thm:complete-deletion}.
\end{proof}

\noindent\begin{minipage}{\textwidth}
For $r+s\geq1$, write
\[
 M_{r,s}=B_{r,s}-
 \{(0,\ldots,0;0,\ldots,0),(2,\ldots,2;1,\ldots,1)\}.
\]
Also set
\[
 \mathcal A=\{(r,s)\in\mathbb Z_{\geq0}^2:s\geq1,\ 2r+s\geq3\}.
\]

\begin{corollary}\label{cor:opposite-family}
\[
 M_{r,s}\in\cO_{\mathrm D}\iff (r,s)\in\mathcal A.
\]
For $(r,s)\in\mathcal A$,
\[
 \bigl(|V(M_{r,s})|,\operatorname{idim}(M_{r,s})\bigr)
 =\bigl(3^r2^s-2,2r+s\bigr),
\]
and the resulting graphs are pairwise nonisomorphic.
\end{corollary}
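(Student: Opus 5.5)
The plan is to read the corollary off from the classification already established, and then to separate the graphs by two isomorphism invariants: order and isometric dimension.

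\emph{Membership in $\cO_{\mathrm D}$.} The two deleted vertices $(0,\ldots,0;0,\ldots,0)$ and $(2,\ldots,2;1,\ldots,1)$ are corners. They differ in every $P_3$-factor with values $\{0,2\}$ and in every binary factor, so $p=r$ and $q=s$, and they are opposite corners. Theorem~\ref{thm:complete-deletion} (equivalently, Theorem~\ref{thm:corner-classification}(3)) then gives $M_{r,s}\in\cO_{\mathrm D}$ if and only if $\mathsf M_{r,s}$ holds. For opposite corners this means $s\ge1$ and $2r+s\ge3$, which is exactly $(r,s)\in\mathcal A$.

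The boundary-$P_3$-edge clause of $\mathsf M_{r,s}$ does not interfere. An opposite-corner pair is never a boundary $P_3$-edge, because for such a pair $\mu=0$ but $\alpha=0\neq1$. One more check concerns nonemptiness when $(r,s)\notin\mathcal A$. The only case with $3^r2^s=2$ is $(r,s)=(0,1)$, where $M_{0,1}$ is empty. I will read the equivalence as ranging over nonempty cases, or simply observe that the empty graph is not a partial cube under the paper's convention, so the equivalence still holds there.

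\emph{Order and dimension.} The order is immediate: $|V(B_{r,s})|=3^r2^s$, and exactly two vertices are removed. For the isometric dimension, take $(r,s)\in\mathcal A$ and note that $(p,q)=(r,s)\neq(0,2)$, since $(0,2)$ has $2r+s=2<3$. So Corollary~\ref{cor:corner-isometry} shows that $M_{r,s}$ is isometric in $B_{r,s}$. By Lemma~\ref{lem:isometric-theta}, each $\Theta$-class of $M_{r,s}$ is the nonempty trace of a $\Theta$-class of $B_{r,s}$, and $B_{r,s}$ has exactly $2r+s$ classes (two per $P_3$-factor, one per $Q_1$-factor). It then suffices to show that every ambient class keeps at least one surviving edge, so that distinct ambient classes yield distinct classes of $M_{r,s}$. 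Each ambient class has $3^{r-1}2^s$ or $3^r2^{s-1}$ edges, and at most two of them are incident with each deleted vertex. Under $2r+s\ge3$ some edge of every class avoids both deleted corners. This is the only step needing care, and it is really only a check in the smallest cases such as $(r,s)=(0,3),(1,1)$. Hence $\operatorname{idim}(M_{r,s})=2r+s$.

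\emph{Pairwise nonisomorphism.} The map $(r,s)\mapsto(3^r2^s-2,\,2r+s)$ is injective on $\mathcal A$. If $3^r2^s=3^{r'}2^{s'}$, unique factorization forces $r=r'$ and $s=s'$; the dimension is not even needed for this. Since the order is an isomorphism invariant, distinct parameter pairs give nonisomorphic graphs.

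I expect no real obstacle; the only point requiring attention is confirming that no $\Theta$-class vanishes in the small cases.
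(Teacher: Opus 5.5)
Your proposal is correct and follows the paper's own argument: membership from Theorem~\ref{thm:corner-classification}(3), with your harmless extra care about the empty case $M_{0,1}$; order and isometric dimension from ambient isometry plus Lemma~\ref{lem:isometric-theta}; and nonisomorphism from unique factorization of $3^r2^s$. The survival check you defer is immediate, because each deleted corner meets at most one edge of any $\Theta$-class: a binary class therefore loses at most two of its $3^r2^{s-1}\ge3$ edges, and a $P_3$-class loses exactly one of its $3^{r-1}2^s\ge2$ edges, throughout $\mathcal A$.
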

\end{minipage}

\begin{proof}
The first assertion follows from
Theorem~\ref{thm:corner-classification}.  In the admissible range, the two
deleted corners form an ambient-isometric pair, so the formulas follow from
the product model and Lemma~\ref{lem:isometric-theta}.  Finally, unique
factorization of $3^r2^s=|V(M_{r,s})|+2$ recovers $(r,s)$.
\end{proof}

Within this family, $M_{0,s}\cong Q_s^{--}$,
$M_{1,1}\cong P_4$, and $M_{1,s}\cong H_{s+2}^{--}$ for $s\geq2$
\cite{XieXu2025}.  The cases $r\geq2$ form a genuinely new subfamily: the
exponent of $3$ in $|V|+2$ separates them from $Q_n^{--}$ and $H_n^{--}$,
while $\delta(M_{r,s})\geq2$ separates them from $Q_n^{-+}$, which has a
pendant vertex.

\section{Concluding remarks}

Starting from the peripheral criterion implicit in the cited earlier work,
Theorem~\ref{thm:root-cube} extracts a canonical simultaneous factorization:
all choices of a zero vertex are carried by one hypercube factor, and the
remaining factor has a unique root.  Thus the ambiguity in a proper embedding
is measured exactly by the two-sided peripheral classes.

Theorem~\ref{thm:ambient-isometry} expands the deletion model from corners
to arbitrary vertices and identifies the exact boundary between metric
preservation and distortion.  Theorem~\ref{thm:isometric-deletion-classification}
handles all metric-preserving cases, while
Theorem~\ref{thm:corner-classification} detects the exceptional
non-isometric corner deletions.  Lemma~\ref{lem:noncorner-dimension} reduces
the remaining infinite-looking branch to products with at most three
factors, and Table~\ref{tab:noncorner} gives its eleven partial-cube orbits.
Theorem~\ref{thm:complete-deletion} therefore classifies every nonempty
two-vertex deletion from every $B_{r,s}$.

None of the eleven non-isometric noncorner graphs is pc-minor-minimal.  Thus
the resulting minimal obstructions are still exactly the opposite-corner
family $M_{r,s}$, together with the boundary-edge realization of $P_4$.
This family recovers the known hypercube and one-$P_3$ cases and adds the
new subfamily with at least two $P_3$ factors.  A complete enumeration of
$\cO_{\mathrm D}$ beyond the two-vertex-deletion model remains open.

\subsection*{Acknowledgements}

This work was supported by the Natural Science Foundation of Shandong
Province (Grant No.~ZR2024MA073). 

\appendix
\section{Exact finite verification}
\label{app:finite-verification}

For completeness, we describe the finite certificate used in
Lemma~\ref{lem:noncorner-finite}.  The ancillary file
\texttt{verify\_noncorner\_classification.py} uses only the Python standard
library and performs the following exact operations.
\begin{enumerate}
\item It constructs each Cartesian product from its coordinate definition
      and deletes every unordered vertex pair.
\item It retains precisely the non-isometric pairs with at least one
      noncorner, and groups them by the signature $\sigma$.
\item For each surviving graph, breadth-first search computes the complete
      integer distance matrix.  Formula~\eqref{eq:finite-theta} is then evaluated for every
      pair of edges.
\item Connectedness and transitivity of $\Theta$ certify the partial-cube
      rows.  For every connected negative row the program prints a triple
      $e,f,g$ witnessing nontransitivity.
\item For every positive row it computes all $\Theta$-halfspaces, counts the
      non-peripheral classes, and exhibits an elementary restriction or
      contraction that is a proper non-daisy pc-minor.
\end{enumerate}

There are $465$ relevant unordered pairs in
\[
 B_{1,1},\ B_{2,0},\ B_{1,2},\ B_{2,1},\ B_{3,0}.
\]
They form forty automorphism orbits.  Exactly eleven orbits, comprising
$91$ actual pairs, are partial cubes.  Their distribution is
\[
\begin{array}{c|rrrr}
(r,s)&(2,0)&(1,2)&(2,1)&(3,0)\\ \hline
\text{number of pairs}&18&20&29&24.
\end{array}
\]
The other twenty-nine orbits consist of three disconnected cases and
twenty-six connected cases with a printed nontransitivity witness.  No
floating-point arithmetic, graph-isomorphism heuristic, or random choice is
used.

\end{document}